\documentclass[pagesize,pdftex]{scrartcl}
%%%%%%%%%%%%%%%%%%%%%%%%%%%%%%%%%%%%%%%%%%%%%%%%%%%%%%%%%%%%%%
%%           Package inclusions                             %%
%%%%%%%%%%%%%%%%%%%%%%%%%%%%%%%%%%%%%%%%%%%%%%%%%%%%%%%%%%%%%%
\usepackage{latexsym} 
\usepackage[intlimits]{amsmath}
\usepackage{amsthm}
\usepackage{amsfonts}
\usepackage{amssymb}
\usepackage{amsxtra}
\usepackage{amscd}
\usepackage{ifthen}
\usepackage{graphicx}
\usepackage[shortlabels]{enumitem}
\usepackage{mathrsfs}
\usepackage[pagebackref=true]{hyperref}

%%%%%%%%%%%%%%%%%%%%%%%%%%%%%%%%%%%%%%%%%%%%%%%%%%%%%%%%%%%%%%
%%          Hyperref setup                                  %%
%%%%%%%%%%%%%%%%%%%%%%%%%%%%%%%%%%%%%%%%%%%%%%%%%%%%%%%%%%%%%%
\hypersetup{
pdfauthor={Giovanni P. Galdi and Mads Kyed},
pdftitle={Time-period flow of a viscous liquid past a body},
breaklinks=true,
colorlinks=true,
linkcolor=blue,
citecolor=blue,
urlcolor=blue,
filecolor=blue,
}

%%%%%%%%%%%%%%%%%%%%%%%%%%%%%%%%%%%%%%%%%%%%%%%%%%%%%%%%%%%%%%
%%          Page style attributes                           %%
%%%%%%%%%%%%%%%%%%%%%%%%%%%%%%%%%%%%%%%%%%%%%%%%%%%%%%%%%%%%%%
\pagestyle{plain}
%\flushbottom
\numberwithin{equation}{section} 
\setkomafont{title}{\normalfont}

%%%%%%%%%%%%%%%%%%%%%%%%%%%%%%%%%%%%%%%%%%%%%%%%%%%%%%%%%%%%%%
%%          Macro definitions                               %%
%%%%%%%%%%%%%%%%%%%%%%%%%%%%%%%%%%%%%%%%%%%%%%%%%%%%%%%%%%%%%%
% New Environments
\newenvironment{pdeq}{ \left\{ \begin{aligned}}{\end{aligned}\right.}

\newcommand{\eqrefsub}[2]{\ensuremath{\eqref{#1}_{#2}}}
%
% Brackets
%
\newcommand{\np}[1]{(#1)}
\newcommand{\nb}[1]{[#1]}
\newcommand{\bp}[1]{\big(#1\big)}
\newcommand{\bb}[1]{\big[#1\big]}
\newcommand{\Bp}[1]{\bigg(#1\bigg)}
\newcommand{\Bpb}[1]{\bigg(#1\bigg]}

\newcommand{\Bb}[1]{\bigg[#1\bigg]}
%
% Tensors and vectors
%

%
% Calligraphic letters
%

\newcommand{\calb}{{\mathcal B}}

\newcommand{\calf}{{\mathcal F}}

\newcommand{\call}{{\mathcal L}}

\newcommand{\calp}{{\mathcal P}}

\newcommand{\calt}{{\mathcal T}}

%
% Mathchars
%
\newcommand{\R}{\mathbb{R}}

\newcommand{\Z}{\mathbb{Z}}

\newcommand{\N}{\mathbb{N}}
%
% Operators
%
\DeclareMathOperator{\e}{e}
\newcommand{\B}{B}

\DeclareMathOperator{\id}{Id}
\DeclareMathOperator{\Div}{div}

\DeclareMathOperator{\supp}{supp}

\DeclareMathOperator{\trace}{Tr}

\newcommand{\embeds}{\hookrightarrow}

%
% Arrows
%

\newcommand{\Ra}{\Rightarrow}

\newcommand{\ra}{\rightarrow}

%
% Sets
%
\newcommand{\set}[1]{\ensuremath{\{#1\}}}

\newcommand{\setc}[2]{\ensuremath{\{#1\ \lvert\ #2\}}}
\newcommand{\setcl}[2]{\ensuremath{\bigl\{#1\ \lvert\ #2\bigr\}}}
\newcommand{\setcL}[2]{\ensuremath{\biggl\{#1\ \lvert\ #2\biggr\}}}

\newcommand{\closure}[2]{\overline{#1}^{#2}}
%
% Sequences
%

\newcommand{\seqkN}[1]{\ensuremath{\set{#1_k}_{k=1}^\infty}}
%
%
% Projections
%
\newcommand{\proj}{\calp}
\newcommand{\projcompl}{\calp_\bot}
\newcommand{\hproj}{\calp_H}
\newcommand{\quotientmap}{\pi}
%
% Groups
%
\newcommand{\grp}{G}
\newcommand{\dualgrp}{\widehat{G}}

\newcommand{\torus}{{\mathbb T}}

%
% Matrices
%

%
% Domains
%

\newcommand{\RthreeR}{\R\times\R^3}

\newcommand{\OmegaR}{\R\times\Omega}
\newcommand{\partialOmegaR}{\R\times\partial\Omega}
\newcommand{\boundeddomain}{D}
\newcommand{\boundeddomainR}{\R\times\boundeddomain}
\newcommand{\partialboundeddomainR}{\R\times\partial\boundeddomain}
%
% Differential Operators
%

\newcommand{\grad}{\nabla}

\newcommand{\pdn}[1]{\frac{\partial #1}{\partial n}}

\newcommand{\dx}{{\mathrm{d}}x}

\newcommand{\ds}{{\mathrm d}s}
\newcommand{\dt}{{\mathrm d}t}

\newcommand{\dS}{{\mathrm d}S}

%
% Harmonic Analysis
%

\newcommand{\TDRper}{\mathscr{S}^\prime_{per}}

\newcommand{\linf}[2]{\langle #1, #2\rangle}

\newcommand{\FT}{\mathscr{F}}
\newcommand{\iFT}{\mathscr{F}^{-1}}

%
%Symbols
%

%
%Special Functions
%

%
% Norms
%
\newcommand{\norm}[1]{\lVert#1\rVert}
\newcommand{\homnorm}[1]{\langle#1\rangle}

\newcommand{\snorm}[1]{{\lvert #1 \rvert}}

\newcommand{\snormL}[1]{{\Bigl\lvert #1 \Big\rvert}}

%
% Spaces
%
\newcommand{\WSR}[2]{W^{#1,#2}} 
 
\newcommand{\DSR}[2]{D^{#1,#2}} 
 
\newcommand{\DSRRzero}[2]{D^{#1,#2}_{\Rzero}} 

\newcommand{\CR}[1]{C^{#1}}  
\newcommand{\LR}[1]{L^{#1}}

\newcommand{\LRloc}[1]{L^{#1}_{loc}} 
\newcommand{\CRi}{\CR \infty}
\newcommand{\CRci}{\CR \infty_0}

%\newcommand{\CRciO}{\CR \infty_0(\Omega)}
%\newcommand{\dWSRO}{\calw^{2,p}(\Omega)}
%\newcommand{\dWSROvecnotation}{\calw^{2,p}(\Omega;\R^3)}
%\newcommand{\HR}[1]{H_{#1}}
%
% Subspaces of solenodial fields
%
\newcommand{\LRsigma}[1]{L^{#1}_{\sigma}}

%
% Subspaces of time-periodic functions
%
%\newcommand{\LRtp}[1]{L_\bot^{#1}}
\newcommand{\LRper}[1]{L^{#1}_{\mathrm{per}}}
\newcommand{\LRcomplper}[1]{L^{#1}_{\mathrm{per},\bot}}
\newcommand{\WSRper}[2]{W^{#1,#2}_{\mathrm{per}}} 
 
\newcommand{\DSRper}[2]{D^{#1,#2}_{\mathrm{per}}} 
\newcommand{\DSRcomplper}[2]{D^{#1,#2}_{\mathrm{per},\bot}} 
\newcommand{\DSRcomplperRzero}[2]{D^{#1,#2}_{\mathrm{per},\bot,\Rzero}} 
\newcommand{\WSRcomplper}[2]{W^{#1,#2}_{\mathrm{per},\bot}} 
\newcommand{\CRper}{\CR{}_{\mathrm{per}}}
\newcommand{\CRiper}{\CR{\infty}_{\mathrm{per}}}
\newcommand{\CRciper}{\CR{\infty}_{0,\mathrm{per}}}
\newcommand{\CRcicomplper}{\CR{\infty}_{0,\mathrm{per},\bot}}

\newcommand{\WSRpercomplsigmazero}[2]{\mathcal{W}^{#1,#2}_{\mathrm{per},\bot}}

%\newcommand{\LRsp}[1]{L_s^{#1}}
%
% Function spaces involving time
%
% \newcommand{\tWSR}[1]{\calw^{(1,2)}_{#1}}
% \newcommand{\tDSR}[1]{\cald^{(0,1)}_{#1}}
% \newcommand{\tWSRloc}[1]{\calw^{(1,2)}_{#1,loc}}
% \newcommand{\tDSRloc}[1]{\cald^{(0,1)}_{#1,loc}}
%
% Special function spaces
%
% \newcommand{\wspace}[1]{\calw_{#1,\sigma}^\bot}

\newcommand{\xoseen}[1]{{X}^{#1}_\rey}
\newcommand{\xoseensigmazero}[1]{{\mathcal{X}}^{#1}_{\rey}}

\newcommand{\bspace}{X}

% \newcommand{\uniquenessclass}[1]{\calw_{#1}}
% \newcommand{\xspacesym}[1]{\mathrm{X}_{#1,\sigma}^{sym}}
% \newcommand{\xspacecompl}[1]{\mathrm{X}_{#1,\sigma}^{\bot}}
%
% Navier Stokes Equations
%
\newcommand{\nsnonlinb}[2]{#1\cdot\grad #2}
\newcommand{\nsnonlin}[1]{\nsnonlinb{#1}{#1}}
\newcommand{\vvel}{v}
\newcommand{\vpres}{p}

\newcommand{\Vvel}{V}
\newcommand{\Vpres}{P}

\newcommand{\adjvel}{\psi}
\newcommand{\adjpres}{\eta}
\newcommand{\wvel}{w}
\newcommand{\wpres}{\pi}
\newcommand{\Wvel}{W}
\newcommand{\Wpres}{\Pi}
\newcommand{\twvel}{\tilde{w}}
\newcommand{\twpres}{\tilde{\pi}}

\newcommand{\uvel}{u}

\newcommand{\uvelbrd}{u_*}
\newcommand{\upres}{\mathfrak{p}}
\newcommand{\uvelinfty}{u_\infty}

\newcommand{\Uvel}{U}

\newcommand{\Upres}{\mathfrak{P}}
\newcommand{\tuvel}{\tilde{u}}
\newcommand{\tupres}{\tilde{\mathfrak{p}}}

\newcommand{\ALOseen}{A_{\mathrm{O}}}
\newcommand{\ALOseeninverse}{A_{\mathrm O}^{-1}}

\newcommand{\liftWvel}{{\mathcal{W}}}

\newcommand{\liftWpres}{\Pi_\bot}
\newcommand{\liftVvel}{{\mathcal{V}}}
\newcommand{\liftVpres}{\Pi_s}
%
% Stress Tensors
%

%
% Fundamental solutions
%

\newcommand{\fundsollaplace}{\varGamma_{\text{\tiny{L}}}}
%
%
% Text macros
%
\newcommand{\tin}{\text{in }}
\newcommand{\tif}{\text{if }}
\newcommand{\ton}{\text{on }}

\newcommand{\tand}{\text{and }}

%
% Constants
%
\newcommand{\half}{\frac{1}{2}}
\newcommand{\fourth}{\frac{1}{4}}
\renewcommand{\epsilon}{\varepsilon}
\renewcommand{\phi}{\varphi}
\newcommand{\rey}{\lambda}

\newcommand{\tay}{\calt}
\newcommand{\per}{\tay}
\newcommand{\iper}{\frac{1}{\tay}}
\newcommand{\perf}{\frac{2\pi}{\tay}}

\newcommand{\eone}{\e_1}

\newcommand{\bigo}{O}

\newcommand{\linoproseen}{\call}

\newcommand{\cutoff}{\chi}

\newcommand{\Rzero}{{R_0}}
\newcommand{\Rstar}{{R_*}}
\newcommand{\Rmiddel}{{R}}
\newcommand{\rhoover}{{\rho}}
\newcommand{\cutoffover}{{\psi_2}}
\newcommand{\rhounder}{{\rho}}
\newcommand{\cutoffunder}{{\psi_1}}

\newcommand{\uscalar}{u}
\newcommand{\bodyvel}{v_b}
\newcommand{\F}{F}
\newcommand{\f}{f}
\newcommand{\testphi}{\phi}
\newcommand{\uvelinftyscalar}{\uvelinfty}
\newcommand{\rhsvvel}{\mathfrak{R}_1}
\newcommand{\rhswvel}{\mathfrak{R}_2}
\newcommand{\fixedpointmap}{\mathcal{N}}
\newcommand{\fixedpointspace}[1]{\mathcal{K}_{\rey}^{#1}}
%
% Constants Enumeration Macros
%
\newcommand{\newCCtr}[2][d]{
\newcounter{#2}\setcounter{#2}{0}
\expandafter\xdef\csname kyedtheconst#2\endcsname{#1}
}
\newcommand{\Cc}[2][nolabel]{
\stepcounter{#2}
\expandafter\ensuremath{\csname kyedtheconst#2\endcsname_{\arabic{#2}}}
\ifthenelse{\equal{#1}{nolabel}}
{}
{\expandafter\xdef\csname kyedconst#1\endcsname
{\expandafter\ensuremath{\csname kyedtheconst#2\endcsname_{\arabic{#2}}}}}
}
\newcommand{\Ccn}[2][nolabel]{
\expandafter\ensuremath{\csname kyedtheconst#2\endcsname}
\ifthenelse{\equal{#1}{nolabel}}
{}
{\expandafter\xdef\csname kyedconst#1\endcsname
{\expandafter\ensuremath{\csname kyedtheconst#2\endcsname}}}
}
\newcommand{\CcSetCtr}[2]{
\setcounter{#1}{#2}
}
\newcommand{\Cclast}[1]{
\expandafter\ensuremath{\csname kyedtheconst#1\endcsname_{\arabic{#1}}}
}
\newcommand{\Ccllast}[1]{
\addtocounter{#1}{-1}
\expandafter\ensuremath{\csname kyedtheconst#1\endcsname_{\arabic{#1}}}
\addtocounter{#1}{1}
}
\newcommand{\const}[1]{
\expandafter{\ifcsname kyedconst#1\endcsname
  \csname kyedconst#1\endcsname
\else
  \errmessage{Undefined Kyedconstant #1.}%
\fi}
}
\renewcommand{\eqrefsub}[2]{\eqref{#1}\textsubscript{#2}}

%%%%%%%%%%%%%%%%%%%%%%%%%%%%%%%%%%%%%%%%%%%%%%%%%%%%%%%%%%%%%%
%           Theorem definitions                             %%
%%%%%%%%%%%%%%%%%%%%%%%%%%%%%%%%%%%%%%%%%%%%%%%%%%%%%%%%%%%%%%
\theoremstyle{plain}
\newtheorem{thm}{Theorem}[section]

\newtheorem{lem}[thm]{Lemma}

\newtheorem{cor}[thm]{Corollary}
\theoremstyle{remark}
\newtheorem{rem}[thm]{Remark}

%%%%%%%%%%%%%%%%%%%%%%%%%%%%%%%%%%%%%%%%%%%%%%%%%%%%%%%%%%%%%%
%%          Document begin                                  %%
%%%%%%%%%%%%%%%%%%%%%%%%%%%%%%%%%%%%%%%%%%%%%%%%%%%%%%%%%%%%%%
\begin{document}
%%%%%%%%%%%%%%%%%%%%%%%%%%%%%%%%%%%%%%%%%%%%%%%%%%%%%%%%%%%%%%
%%          Title, author, date, abstract, etc.             %%
%%%%%%%%%%%%%%%%%%%%%%%%%%%%%%%%%%%%%%%%%%%%%%%%%%%%%%%%%%%%%%
\title{Time-period flow of a viscous liquid past a body}

\author{
Giovanni P. Galdi\thanks{Partially supported by NSF-DMS grant 1614011.}\\ 
Department of Mechanical Engineering and Materials Science\\
University of Pittsburgh\\
Pittsburgh, PA 15261, USA\\
Email: \texttt{galdi@pitt.edu}
\and
Mads Kyed\\ 
Fachbereich Mathematik\\
Technische Universit\"at Darmstadt\\
Schlossgartenstr. 7, 64289 Darmstadt, Germany\\
Email: \texttt{kyed@mathematik.tu-darmstadt.de}\\
}

\date{\today}
\maketitle

\begin{abstract}
Time-periodic solutions to 
the Navier-Stokes equations that govern the flow of a viscous liquid past a three-dimensional body moving with a time-periodic velocity 
are investigated.
The net motion of the body over a full time-period is assumed to be non-zero.
In this case the appropriate linearization of the system is constituted by the time-periodic Oseen equations in a three-dimensional exterior domain.
A priori $\LR{q}$ estimates are established for this linearization. Based on these estimates, 
existence of a solution to the fully non-linear Navier-Stokes problem is obtained by the contraction mapping principle.  
\end{abstract}

\noindent\textbf{MSC2010:} Primary 35Q30, 35B10, 76D05, 76D03.\\
\noindent\textbf{Keywords:} Navier-Stokes, Oseen, time-periodic solutions, exterior domain.

%%%%%%%%%%%%%%%%%%%%%%%%%%%%%%%%%%%%%%%%%%%%%%%%%%%%%%%%%%%%%%
%%          Global Constant Counters                        %%
%%%%%%%%%%%%%%%%%%%%%%%%%%%%%%%%%%%%%%%%%%%%%%%%%%%%%%%%%%%%%%
\newCCtr[C]{C}
\newCCtr[c]{c}
\let\oldproof\proof
\def\proof{\CcSetCtr{c}{-1}\oldproof} 
\newCCtr[M]{M}
\newCCtr[B]{B}
\newCCtr[\epsilon]{eps}
\CcSetCtr{eps}{-1}

%%%%%%%%%%%%%%%%%%%%%%%%%%%%%%%%%%%%%%%%%%%%%%%%%%%%%%%%%%%%%%
%%          Main document                                   %%
%%%%%%%%%%%%%%%%%%%%%%%%%%%%%%%%%%%%%%%%%%%%%%%%%%%%%%%%%%%%%%
\section{Introduction}
Consider a three-dimensional body moving with a prescribed time-periodic velocity $\bodyvel(t)$ in a viscous liquid
governed by the Navier-Stokes equations.  
If the body occupies a bounded, simply connected domain $\calb\subset\R^3$, the liquid flowing past it occupies the corresponding exterior domain 
$\Omega:=\R^3\setminus\calb$.
The motion of the liquid can then be described, in a coordinate system attached to the body, by the following system of equations: 
\begin{align}\label{intro_nspastbody}
\begin{pdeq}
&\partial_t\uvel + \nsnonlin{\uvel} = \nu\Delta\uvel - \grad\upres + f && \tin\OmegaR,\\
&\Div\uvel =0 && \tin\OmegaR,\\
&\uvel=\uvelbrd&& \ton\partialOmegaR,\\
&\lim_{\snorm{x}\ra\infty} \uvel(t,x) = -\bodyvel(t).
\end{pdeq}
\end{align} 
Here, $\uvel:\OmegaR\ra\R^3$ denotes the Eulerian velocity field and $\upres:\OmegaR\ra\R$ the pressure field
of the liquid. 
As is natural for time-periodic problems, the time axis is taken to be the whole of $\R$, and 
so $(t,x)\in\OmegaR$ denotes the time variable $t$ and spatial variable $x$ of the system, respectively.
A body force $f:\OmegaR\ra\R^3$ and velocity distribution 
$\uvelbrd:\partialOmegaR\ra\R^3$ of the liquid on the surface of the body have been included. 
The constant coefficient of kinematic viscosity of the liquid is denoted by $\nu$.
An investigation of time-periodic solutions, that is, solutions $(\uvel,\upres)$ satisfying for some fixed $\per>0$
\begin{align}\label{intro_tpcondsol}
\uvel(t+\per,x) = \uvel(t,x),\quad
\upres(t+\per,x) = \upres(t,x),
\end{align}
corresponding to time-periodic data of the same period,
\begin{align}\label{intro_tpconddat}
\bodyvel(t+\per)=\bodyvel(t),\quad f(t+\per,x) = f(t,x),\quad\uvelbrd(t+\per,x) = \uvelbrd(t,x),
\end{align}
will be carried out. 

We assume the net motion of the body over a full time-period is non-zero, that is, 
\begin{align}\label{intro_uvelinftyneqzero}
\int_0^\per \bodyvel(t)\,\dt\neq 0.
\end{align}
We shall not treat the case of a vanishing net motion.
The distinction between the two cases is justified by the physics of the problem. 
In the former case, the body performs a nonzero translatory motion, which induces a wake in the region behind it.  
In the latter case, the motion of the body would be purely oscillatory without a wake. The different properties 
of the solutions in the two cases also influence the mathematical analysis of the problem. If the net motion of the body has a 
nonzero translatory component, the appropriate linearization
of \eqref{intro_nspastbody} is a time-periodic Oseen system. If the net motion over a period is zero, the linearization is a time-periodic Stokes system.  
The investigation in this paper is based on suitable $\LR{q}$ estimates for solutions to the time-periodic Oseen system. Similar estimate do not hold for the corresponding Stokes 
system, in which case a different approach is needed.
It will further be assumed that the motion of the body is directed along a single axis, say
\begin{align}\label{intro_bodyvelsingleaxis}
\bodyvel(t) = \uvelinfty(t)\eone,\quad \uvelinfty(t)\in\R.
\end{align}
This assumption is made for technical reasons only.
 
We shall in Theorem \ref{ext_ExistenceNonlinProbThm} establish existence of a solution to \eqref{intro_nspastbody} for data $f$, $\bodyvel$ and $\uvelbrd$ sufficiently restricted in ``size''. The solution is strong both in the sense of local regularity and global summability.
The proof is based on the contraction mapping principle and suitable $\LR{q}$ estimates 
of solutions to a linearization of \eqref{intro_nspastbody}. More specifically, we linearize \eqref{intro_nspastbody} around $\bodyvel$ and 
obtain, due to \eqref{intro_uvelinftyneqzero}, a time-periodic Oseen system. In Theorem \ref{lin_linsystem} and Corollary \ref{ext_lqestThmFull}
we identify a time-periodic Sobolev-type space that is mapped homeomorphically onto a time-periodic $\LR{p}$ space by the Oseen operator.
We then employ embedding properties of the Sobolev-type space to show existence of a solution to the fully nonlinear problem \eqref{intro_nspastbody}
by a fixed-point argument. A similar result was obtained for a two-dimensional exterior domain in \cite{GaldiTPFlowMovingCylinder}.

The study of time-periodic solutions to the Navier-Stokes equations was originally suggested by \textsc{Serrin} \cite{Serrin_PeriodicSolutionsNS1959}. 
The first rigorous investigations of the classical time-periodic Navier-Stokes problem in bounded domains are due to \textsc{Prodi} \cite{Prodi1960}, \textsc{Yudovich} \cite{Yudovich60} and \textsc{Prouse} \cite{Prouse63}. 
Further properties and extensions to other types of domains and problems have been studied by a number of authors over the years:
\cite{KanielShinbrot67},
\cite{Takeshita69},
\cite{Morimoto1972}, 
\cite{MiyakawaTeramoto82},
\cite{Teramoto1983},
\cite{Maremonti_TimePer91},
\cite{Maremonti_HalfSpace91},
\cite{MaremontiPadula96},
\cite{KozonoNakao96},
\cite{Yamazaki2000},
\cite{GaldiSohr2004},
\cite{GaldiSilvestre_Per06},
\cite{GaldiSilvestre_Per09},
\cite{Taniuchi2009},
\cite{BaalenWittwer2011},
\cite{Silvestre_TPFiniteKineticEnergy12},
\cite{GaldiTP2D12},
\cite{KyedExRegTPNS2014}, \cite{KyedMaxregTPNS2014}, \cite{Kyed_FundsolTPStokes2016}
\cite{Nguyen2014},
\cite{HieberTP2015}.
Of these articles, \cite{GaldiSilvestre_Per06, GaldiSilvestre_Per09, KyedMaxregTPNS2014, KyedExRegTPNS2014, GaldiTP2D12, Kyed_FundsolTPStokes2016}
treat the same type of flow past a body \eqref{intro_nspastbody} that is investigated in the following. While weak solutions to an even more general problem are established in \cite{GaldiSilvestre_Per06, GaldiSilvestre_Per09}, the corresponding whole-space problem in dimension two and three is studied in \cite{GaldiTP2D12,KyedExRegTPNS2014,KyedMaxregTPNS2014, Kyed_FundsolTPStokes2016}.

\section{Notation}

Constants in capital letters in the proofs and theorems are global, while constants in small letters are local to the proof in which they appear.
The notation $\Ccn{C}(\xi)$ is used to emphasize the dependence of a constant on a parameter $\xi$.

The notation $\B_R$ is used to denote balls in $\R^n$ centered at $0$ with radius $R>0$.

The symbol $\Omega$ denotes an exterior domain of $\R^n$, that is, an open connected set that is the complement of the closure of a simply connected bounded domain $\calb\subset\R^n$. Without loss of generality, it is assumed that $0\in\calb$. 
Two constants $\Rzero>\Rstar>0$ with
$\calb\subset\subset\B_\Rstar$ remain fixed.
Moreover, the domains $\Omega_R:=\Omega\cap\B_R$, $\Omega_{R_1,R_2}:=\Omega\cap\B_{R_2}\setminus\B_{R_1}$ and
$\Omega^R:=\Omega\setminus\B_R$ are introduced.   

Points in $\R\times\R^n$ are denoted by $(t,x)$. Throughout, $t$ is referred to as the time  and $x$ as the spatial variable.
For a sufficiently regular function $u:\R\times\R^n\ra\R$, $\partial_i u:=\partial_{x_i} u$ denotes spatial derivatives.

\section{Preliminaries}

A framework shall be employed based on the function space of smooth and compactly supported $\per$-time-periodic 
functions:  
\begin{align*}
\CRciper(\R\times\overline{\Omega}) := \setcl{f\in\CRi(\R\times\Omega)}{f(t+\per,x)=f(t,x)\ \wedge\ f\in\CRci\bp{[0,\per]\times\overline{\Omega}}}.
\end{align*}

For simplicity, the interval $(0,\per)$ of one time period is sometimes denoted by $\torus$. 
An $\LR{q}$ norm on the time-space domain $\torus\times\Omega$ is defined by
\begin{align*}
\norm{f}_q:=\norm{f}_{q,\torus\times\Omega}:= \Bp{\iper\int_0^\per\int_{\Omega}\snorm{f(t,x)}^q\,\dx\dt}^{\frac{1}{q}},\quad q\in[1,\infty).
\end{align*}
Lebesgue spaces of time-periodic functions are defined by  
\begin{align*}
&\LRper{q}(\OmegaR):= \closure{\CRciper(\R\times\overline{\Omega})}{\norm{\cdot}_{q}}.
\end{align*}
It is easy to see that the elements of $\LRper{q}(\OmegaR)$ coincide with the $\per$-time-periodic extension of functions in $\LR{q}((0,\per)\times\Omega)$.
The Lebesgue space $\LR{q}(\Omega)$ is treated as the subspace of functions in
$\LRper{q}(\OmegaR)$ that are time-independent. 
For such functions the  $\LR{q}(\Omega)$ norm coincides with the norm $\norm{f}_q$ introduced above. 

Sobolev spaces of $\per$-time-periodic functions are also introduced
as completions of $\CRciper(\R\times\overline{\Omega})$
in appropriate norms:
\begin{align*}
&\WSRper{1,2}{q}(\OmegaR) := \closure{\CRciper(\R\times\overline{\Omega})}{\norm{\cdot}_{1,2,q}},\quad 
\norm{\uvel}_{1,2,q}:=\Bp{\sum_{\snorm{\beta}\leq 1} \norm{\partial_t^\beta \uvel}_{q}^q+\sum_{\snorm{\alpha}\leq 2} \norm{\partial_x^\alpha \uvel}_{q}^q}^{\frac{1}{q}}.
\end{align*}
These Sobolev spaces are clearly subspaces of the classical anisotropic Sobolev spaces 
$\WSR{1,2}{q}((0,\per)\times\Omega)$.
Analogously, homogeneous Sobolev spaces of time-periodic functions
\begin{align*}
&\DSRper{1}{q}(\OmegaR) := \closure{\CRciper(\R\times\overline{\Omega})}{\homnorm{\cdot}_{1,q}},\quad 
\homnorm{\upres}_{1,q}:=\norm{\grad \upres}_{q} + {\iper\int_0^\per\snormL{\int_{\Omega_\Rzero}\upres(t,x)\,\dx}\dt}
\end{align*}
are defined. It is easy to see that $\DSRper{1}{q}(\OmegaR)$ can be identified with $\LR{q}\bp{(0,\per);\DSR{1}{q}(\Omega)}$, where
$\DSR{1}{q}(\Omega)$ is the classical homogeneous Sobolev space. 

In a similar manner, Lebesgue and Sobolev spaces of time-periodic vector-valued functions are defined for any Banach space $\bspace$
respectively as
\begin{align*}
\LRper{q}\bp{\R;\bspace}:=\closure{\CRiper\bp{\R;\bspace}}{\norm{\cdot}_{\LR{q}((0,\per);\bspace)}},\quad 
\WSRper{m}{q}\bp{\R;\bspace}:=
\closure{\CRiper\bp{\R;\bspace}}{\norm{\cdot}_{\WSR{m}{q}((0,\per);\bspace)}}.
\end{align*} 
One readily verifies that $\LRper{q}\bp{\R;\bspace}$ coincides with the $\per$-periodic extensions of functions in the Lebesgue space 
$\LRper{q}\bp{(0,\per);\bspace}$.
One may further verify for sufficiently regular domains, say $\Omega$ of class $\CR{1}$, that
\begin{align*}
\WSRper{1,2}{q}(\OmegaR) &= \WSRper{1}{q}\bp{\R;\LR{q}(\Omega)}\cap\LRper{q}\bp{\R;\WSR{2}{q}(\Omega)} \\
&=\setc{\uvel\in\LRper{q}(\OmegaR)}{\norm{\uvel}_{1,2,q}<\infty}.
\end{align*}

Sufficiently regular $\per$-time-periodic functions $u:\R\times\R^n\ra\R$ can be decomposed into 
what will be referred to as a \emph{steady-state} part $\proj u:\R\times\R^n\ra\R$
and \emph{oscillatory} part $\projcompl u:\R\times\R^n\ra\R$ by
\begin{align}\label{intro_defofprojGernericExpression}
\proj u(t,x):=\iper\int_0^\per u(x,s)\,\ds\quad\tand\quad\projcompl u(t,x) := u(t,x)-\proj u(t,x)
\end{align}
whenever these expressions are well-defined. 
Note that the steady-state part of a $\per$-time-periodic function $u$ is time-independent,
and the oscillatory part $\projcompl u$ has vanishing time-average over the period.
Also note that $\proj$ and $\projcompl$ are complementary projections, that is, $\proj^2=\proj$ and $\projcompl=\id-\proj$.
Based on these projections, the following sub-spaces are defined: 
\begin{align*}
&\CRcicomplper(\OmegaR):=\setc{f\in\CRciper(\OmegaR)}{\proj f = 0},\\
&\LRcomplper{q}(\OmegaR):=\setc{f\in\LRper{q}(\OmegaR)}{\proj f = 0},\\
&\WSRcomplper{1,2}{q}(\OmegaR):= \setc{\uvel\in\WSRper{1,2}{q}(\OmegaR)}{\proj \uvel = 0}.
% &\WSRsigmacomplper{1,2}{q}(\OmegaR):= \setc{\uvel\in\WSRper{1,2}{q}(\OmegaR)^3}{\proj \uvel = 0\ \wedge\ \Div\uvel =0},\\
% &\WSRNsigmacomplper{1,2}{q}(\OmegaR):= \setc{\uvel\in\WSRper{1,2}{q}(\OmegaR)^3}{\uvel_{|\partial\Omega}=0\ \wedge\ \proj \uvel = 0\ \wedge\ \Div\uvel =0}.
\end{align*}
Intersections of these spaces are denoted by
\begin{align*}
&\LRcomplper{q,r}(\OmegaR):=\LRcomplper{q}(\OmegaR)\cap\LRcomplper{r}(\OmegaR),\\
&\WSRcomplper{1,2}{q,r}(\OmegaR):=\WSRcomplper{1,2}{q}(\OmegaR)\cap\WSRcomplper{1,2}{r}(\OmegaR)
\end{align*}
and equipped with the canonical norms. 
Similar subspaces of homogeneous Sobolev spaces are defined by
\begin{align*}
&\DSRcomplper{1}{q}(\OmegaR):= \setcL{\upres\in\DSRper{1}{q}(\OmegaR)}{\proj \upres = 0},\\
&\DSRcomplper{1}{q,r}(\OmegaR):= \DSRcomplper{1}{q}(\OmegaR)\cap\DSRcomplper{1}{r}(\OmegaR),\\
&\DSRcomplperRzero{1}{q}(\OmegaR):= \setcL{\upres\in\DSRper{1}{q}(\OmegaR)}{\proj \uvel = 0\ \wedge\ \int_{\Omega_\Rzero}\upres(t,x)\,\dx=0},\\
&\DSRcomplperRzero{1}{q,r}(\OmegaR) := \DSRcomplperRzero{1}{q}(\OmegaR)\cap \DSRcomplperRzero{1}{r}(\OmegaR).
\end{align*}
All spaces above are clearly Banach spaces.

Finally, we introduce for $\rey>0$ and $q\in(1,2)$
the Sobolev-type space 
\begin{align*}
&\xoseen{q}(\Omega):=\setc{\vvel\in\LRloc{q}(\Omega)}{\norm{\vvel}_{\xoseen{q}}<\infty},\\
&\norm{\vvel}_{\xoseen{q}} := 
\rey^{\half}\norm{\vvel}_{\frac{2q}{2-q}} + \rey^{\fourth}\norm{\grad\vvel}_{\frac{4q}{4-q}} +
\rey\norm{\partial_1\vvel}_q + \norm{\grad^2\vvel}_q, 
\end{align*}
which is used to characterize the velocity field of a steady-state Oseen system in three-dimensional exterior domains.
An appropriate function space for the corresponding pressure term is the homogeneous Sobolev space
\begin{align*}
\DSRRzero{1}{q}(\Omega):=\setcL{\vvel\in\DSR{1}{q}(\Omega)}{\int_{\B_\Rzero}\vpres\,\dx=0}
\end{align*}
equipped with the norm $\homnorm{\cdot}_{1,q}:=\norm{\grad\cdot}_q$. As mentioned above, $\DSR{1}{q}(\Omega)$ 
denotes the classical homogeneous Sobolev space.

\section{An Embedding Theorem}

Embedding properties of the Sobolev spaces of time-periodic functions defined in the previous section shall 
be established. As such properties may be of use in other applications as well, we consider in this section an exterior domain $\Omega\subset\R^n$ of arbitrary dimension $n\geq 2$.

\begin{thm}\label{SobEmbeddingThm}
Let $\Omega\subset\R^n$ ($n\geq 2)$ be an exterior domain of class $\CR{1}$ and $q\in(1,\infty)$. 
Assume that $\alpha\in\bb{0,2}$ and $p_0,r_0\in[q,\infty]$ satisfy
\begin{align}\label{SobEmbeddingThm_Condp0}
\begin{pdeq}
&r_0\leq \frac{2q}{2-\alpha q} && \tif\ \alpha q<2,\\
&r_0<\infty && \tif\ \alpha q =2,\\
&r_0\leq\infty && \tif\ \alpha q >2,
\end{pdeq}
\qquad
\begin{pdeq}
&p_0\leq \frac{nq}{n-(2-\alpha) q} && \tif\  \np{2-\alpha}q<{n},\\
&p_0<\infty && \tif\ \np{2-\alpha}q={n},\\
&p_0\leq\infty && \tif\ \np{2-\alpha}q>{n},
\end{pdeq}
\end{align} 
and that $\beta\in\bb{0,1}$ and $p_1,r_1\in[q,\infty]$ satisfy
\begin{align}\label{SobEmbeddingThm_Condp1}
\begin{pdeq}
&r_1\leq \frac{2q}{2-\beta q} && \tif\ \beta q<2,\\
&r_1<\infty && \tif\ \beta q =2,\\
&r_1\leq\infty && \tif\ \beta q >2,
\end{pdeq}
\qquad
\begin{pdeq}
&p_1\leq \frac{nq}{n-(1-\beta) q} && \tif\  \np{1-\beta}q<{n},\\
&p_1<\infty && \tif\ \np{1-\beta}q={n},\\
&p_1\leq\infty && \tif\ \np{1-\beta}q>{n}.
\end{pdeq}
\end{align} 
Then
\begin{align}\label{SobEmbeddingThm_Est}
\forall\uscalar\in\WSRper{1,2}{q}(\OmegaR):\quad \norm{\uscalar}_{\LRper{r_0}\np{\R;\LR{p_0}(\Omega)}} +\norm{\grad\uscalar}_{\LRper{r_1}\np{\R;\LR{p_1}(\Omega)}}  \leq \Cc{C}\norm{\uscalar}_{1,2,q},
\end{align}
with $\Cclast{C}=\Cclast{C}(\per,n,\Omega,r_0,p_0,r_1,p_1)$.
\end{thm}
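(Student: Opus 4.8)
The plan is to exploit the tensor-product structure of the time-periodic Sobolev space, namely the identity $\WSRper{1,2}{q}(\OmegaR)=\WSRper{1}{q}\bp{\R;\LR{q}(\Omega)}\cap\LRper{q}\bp{\R;\WSR{2}{q}(\Omega)}$ recorded in the Preliminaries, and to interpolate between the two factors. First I would reduce matters to a statement about the Banach-space-valued function $t\mapsto\uscalar(t,\cdot)$. For the spatial estimate one uses the classical Gagliardo--Nirenberg--Sobolev inequality on the exterior domain $\Omega$ of class $\CR{1}$: for $\alpha\in\bb{0,2}$ the interpolation inequality $\norm{w}_{p_0}\leq\Cc{C}\,\norm{w}_{\WSR{2}{q}(\Omega)}^{\theta}\,\norm{w}_{q}^{1-\theta}$ holds with $\theta$ chosen so that the scaling matches, the admissible range of $p_0$ being exactly $p_0\leq\frac{nq}{n-(2-\alpha)q}$ (with the usual endpoint caveats when $(2-\alpha)q=n$ or $>n$); here $\alpha$ plays the role of the ``fractional number of derivatives not used'' in the spatial direction, i.e. we embed $\WSR{2-\text{(something)}}{q}$ rather than the full $\WSR{2}{q}$. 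The analogous inequality with $\WSR{1}{q}(\Omega)$ in place of $\WSR{2}{q}(\Omega)$ and $\beta\in\bb{0,1}$ gives the range for $p_1$ in \eqref{SobEmbeddingThm_Condp1}.

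Next comes the temporal direction. The key point is that, having spent a fraction of the available spatial regularity, the remaining regularity can be traded for integrability \emph{in time} through the torus. Concretely, from $\uscalar\in\WSRper{1}{q}\bp{\R;\LR{q}(\Omega)}\cap\LRper{q}\bp{\R;\WSR{2}{q}(\Omega)}$ one gets by complex (or real) interpolation that $\uscalar$ lies in a Bessel-potential space $H^{s}_{q}\bp{\torus;\LR{q}(\Omega)}$ with $s=1-\alpha/2\in[0,1]$ (respectively $s=1-\beta$ for the gradient). Since $\torus=(0,\per)$ is one-dimensional and periodic, the Sobolev embedding $H^{s}_{q}(\torus)\embeds\LR{r}(\torus)$ holds for $r\leq\frac{q}{1-sq}=\frac{2q}{2-\alpha q}$ when $sq<1$, with the borderline and supercritical cases giving $r<\infty$ and $r\leq\infty$ respectively — precisely the three cases for $r_0$ in \eqref{SobEmbeddingThm_Condp0}, and for $r_1$ after the obvious substitution. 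Combining the spatial and temporal steps — first embed in space into $\LR{p_0}(\Omega)$ at the cost of lowering the temporal order to $s$, then embed $H^{s}_{q}\bp{\torus;\LR{p_0}(\Omega)}\embeds\LR{r_0}\bp{\torus;\LR{p_0}(\Omega)}$ — yields the first term on the left of \eqref{SobEmbeddingThm_Est}; applying the same scheme to $\grad\uscalar$, which lies in $\WSRper{1}{q}\bp{\R;\LR{q}(\Omega)}\cap\LRper{q}\bp{\R;\WSR{1}{q}(\Omega)}$, yields the second. All constants depend only on $\per$, $n$, $\Omega$ and the chosen exponents, since they come from a fixed chain of Sobolev and interpolation inequalities.

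The two steps must be executed in the right order: one cannot simply embed in space and in time independently and multiply, because the total ``budget'' of $\WSR{1,2}{q}$-regularity is shared; the parameters $\alpha$ and $\beta$ are exactly the bookkeeping devices that split this budget, and the hypotheses \eqref{SobEmbeddingThm_Condp0}--\eqref{SobEmbeddingThm_Condp1} are the resulting admissibility constraints.

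The step I expect to be the main obstacle is making the interpolation identification $\bb{\WSRper{1}{q}(\R;\LR{q}(\Omega)),\ \LRper{q}(\R;\WSR{2}{q}(\Omega))}$ rigorous and uniform in the relevant parameters on the \emph{exterior domain} $\Omega$, which is unbounded and only of class $\CR{1}$. One must ensure that a bounded extension operator $\WSR{k}{q}(\Omega)\to\WSR{k}{q}(\R^n)$ exists for $k=1,2$ so that the mixed-norm interpolation and the intermediate Bessel-potential spaces behave as in the whole-space case, and that periodicity in $t$ is preserved throughout (working on the torus $\torus=\R/\per\Z$ via Fourier series in $t$ handles this cleanly, with the factor $\per$ accounting for the length of the period). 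Once the extension and the periodic interpolation are in place, the remaining inequalities are the standard scalar Sobolev embeddings quoted above, and \eqref{SobEmbeddingThm_Est} follows by summing the two contributions.
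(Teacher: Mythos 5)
Your approach is correct in spirit but follows a genuinely different route from the paper's. You invoke a mixed-derivative/interpolation theorem to pass from the intersection $\WSRper{1}{q}\bp{\R;\LR{q}(\Omega)}\cap\LRper{q}\bp{\R;\WSR{2}{q}(\Omega)}$ to an anisotropic Bessel-potential space, and then apply one-dimensional (in $t$) and $n$-dimensional (in $x$) scalar Sobolev embeddings. The paper instead never quotes an abstract interpolation fact: after extending to $\R\times\R^n$ and passing to the group $(\R/2\pi\Z)\times\R^n$, it writes $\partial_j\projcompl\uscalar$ as a Fourier multiplier applied to $\partial_t\uscalar-\Delta\uscalar$, splits the multiplier into a bounded $\LR{q}(\grp)$ piece (handled by a transference argument) tensored with two explicit fractional-integration kernels — $\gamma_\beta\in\LR{1/(1-\beta/2),\infty}(\torus)$ in time and the Riesz kernel $\snorm{\xi}^{\beta-1}$ in space — and then combines Young's convolution inequality in $t$, the Hardy--Littlewood--Sobolev inequality in $x$, and Minkowski's integral inequality; the steady-state part $\proj\uscalar$ is treated separately by the ordinary Sobolev embedding on $\R^n$. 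Both strategies produce the same admissible exponent ranges; yours packages the time-space trade-off into an interpolation lemma, while the paper's makes the same trade-off visible as a tensor-product convolution, which is essentially how the mixed-derivative theorem you want is itself proved. So your route is not more elementary, just more packaged, and you have correctly located where the real work lies (making the interpolation identification rigorous on an unbounded $\CR{1}$ domain with parameters uniform in $\per$). One slip worth flagging: you write that the intermediate temporal Bessel order is $s=1-\alpha/2$, but with the spatial interpolation weight $\theta=1-\alpha/2$ on the $\WSR{2}{q}$-factor (so that $2\theta=2-\alpha$ spatial derivatives are consumed) the remaining temporal regularity is $1-\theta=\alpha/2$; your subsequent identity $\frac{q}{1-sq}=\frac{2q}{2-\alpha q}$ only holds with $s=\alpha/2$, and the intermediate space after the spatial embedding should carry $\LR{p_0}(\Omega)$, not $\LR{q}(\Omega)$, as its range. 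These are labeling slips rather than structural gaps, since the exponent arithmetic you carry out is the consistent one.
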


\begin{proof}
The regularity of $\Omega$ is sufficient to ensure existence of a continuous extension operator 
$E:\WSRper{1,2}{q}(\OmegaR)\ra\WSRper{1,2}{q}(\R\times\R^n)$ as in the case of classical Sobolev spaces. 
Consequently, it suffices to show \eqref{SobEmbeddingThm_Est} 
for functions $\uscalar\in\WSRper{1,2}{q}(\R\times\R^n)$. For this purpose, we identify $\WSRper{1,2}{q}(\R\times\R^n)$ with 
a Sobolev space of functions defined on the group $\grp:=\bp{\R/{2\pi\Z}}\times\R^n$. Endowed with the quotient topology 
induced by the quotient map 
$\quotientmap:\R\times\R^n\ra\bp{\R/{2\pi\Z}}\times\R^n$, $\grp$ becomes a locally compact abelian group. A Haar measure is given by the product of the Lebesgue measure on $\R^n$ and the normalized Lebesgue measure on the interval $[0,\per)\simeq\R/\per\Z$. Also via the quotient map, the 
space of smooth functions on $\grp$ is defined as
\begin{align}\label{lt_smoothfunctionsongrp}
\CRi(\grp):=\setc{f:\grp\ra\R}{f\circ\quotientmap \in\CRi(\R\times\R^n)}.
\end{align}
Sobolev spaces $\WSR{1,2}{q}(\grp)$ can then be defined as 
the closure of $\CRci(\grp)$, the subspace of compactly supported functions in $\CRi(\grp)$, in the norms $\norm{\cdot}_{1,2,q}$. It is easy to verify that $\WSR{1,2}{q}(\grp)$ and $\WSRper{1,2}{q}(\R\times\R^n)$
are homeomorphic. Further details can be found in \cite{KyedMaxregTPNS2014} and \cite{habil}. 
The availability of the Fourier transform $\FT_\grp$ is an immediate advantage in the group setting. Denoting points in the dual group $\dualgrp:=\Z\times\R^n$ by $(k,\xi)$, we obtain the representation
\begin{align*}
\partial_j\projcompl\uvel 
&= \iFT_\grp\Bb{\frac{i\xi_j \bp{1-\delta_\Z(k)}}{\snorm{\xi}^2+i\perf k} \FT_\grp\bb{\partial_t\uvel-\Delta\uvel}},
\end{align*}
where $\delta_\Z$ denotes the delta distribution on $\Z$, that is, $\delta_{\Z}(0)=1$ and $\delta_{\Z}(k)=0$ for $k\neq 0$.
Since $\FT_\grp=\FT_{\R/{2\pi\Z}}\circ\FT_{\R^n}$, it follows that
\begin{align}\label{SobEmbeddingThm_ConvRep}
\partial_j\projcompl\uvel &= \iFT_{\R/{2\pi\Z}}\Bb{\np{1-\delta_\Z}\snorm{k}^{-\half\beta}}*_{\R/{2\pi\Z}}\iFT_{\R^n}\Bb{\snorm{\xi}^{\beta-1}}*_{\R^n}F,
\end{align}
with
\begin{align*}
F:=\iFT_\grp\Bb{ M(k,\xi)\, \FT_\grp\bb{\partial_t\uvel-\Delta\uvel}},\quad M(k,\xi):=\frac{\snorm{k}^{\half\beta}\,\snorm{\xi}^{1-\beta}\,i\xi_j \bp{1-\delta_\Z(k)}}{\snorm{\xi}^2+i\perf k}.
\end{align*}
Owing to the fact that $M$ has no singularities,
one can utilize a so-called transference principle and verify that $M$ is an $\LR{q}(\grp)$ Fourier multiplier for all $q\in(1,\infty)$; see
\cite{KyedMaxregTPNS2014} for the details on such an approach. 
It follows
that $F\in\LR{q}(\grp)$ with $\norm{F}_q \leq \Ccn{C}\,\norm{u}_{1,2,q}$. It is standard to compute the inverse Fourier transform $\gamma_\beta:=\iFT_{\R/{2\pi\Z}}\Bb{\np{1-\delta_\Z}\snorm{k}^{-\half\beta}}$ appearing on the right-hand side in \eqref{SobEmbeddingThm_ConvRep}. 
Choosing for example $[-\half\per,\half\per)$ as a realization of ${\R/{2\pi\Z}}$, one has $\gamma_\beta(t)=\Ccn{C}t^{-1+\half\beta} + h(t)$ with $h\in\CRi({\R/{2\pi\Z}})$; see for example \cite[Example 3.1.19]{Grafakos1}.
Clearly, $\gamma_\beta\in\LR{\frac{1}{1-\half\beta},\infty}({\R/{2\pi\Z}})$. Thus, by Young's inequality, see for example \cite[Theorem 1.4.24]{Grafakos1}, the mapping $\phi\ra \gamma_\beta *_{\R/{2\pi\Z}} \phi$ extends to a bounded operator from $\LR{q}({\R/{2\pi\Z}})$ into $\LR{r}({\R/{2\pi\Z}})$ for $r\in(1,\infty)$ satisfying
\begin{align}\label{SobEmbeddingThm_ExponentsCond1}
\frac{1}{r} = \Bp{1-\half\beta} + \frac{1}{q}-1.
\end{align}
The mapping $\phi\ra\iFT_{\R^n}\bb{\snorm{\xi}^{\beta-1}}*_{\R^n}\phi$ can be identified with the operator $\Delta_{\R^n}^{\frac{\beta-1}{2}}$. It is well-known, see for example \cite[Theorem 6.1.13]{Grafakos2}, that this operator is bounded from
$\LR{q}(\R^n)$ into $\LR{p}(\R^n)$ for $p\in(1,\infty)$ satisfying
\begin{align}\label{SobEmbeddingThm_ExponentsCond2}
\frac{1}{p} = \frac{1}{q}- \frac{1-\beta}{n}.
\end{align}
We now consider $r_1,p_1\in(1,\infty)$ that satisfy \eqref{SobEmbeddingThm_ExponentsCond1} and \eqref{SobEmbeddingThm_ExponentsCond2}. We then recall \eqref{SobEmbeddingThm_ConvRep} to estimate
\begin{align*}
\norm{\partial_j\projcompl\uvel}_{\LR{r_1}\np{{\R/{2\pi\Z}};\LR{p_1}(\Omega)}} 
&= \Bp{\int_{\R/{2\pi\Z}} \norm{\iFT_{\R^n}\bb{\snorm{\xi}^{\beta-1}}*_{\R^n}\gamma_\beta *_{{\R/{2\pi\Z}}} F (t,\cdot)}_{p_1}^{r_1}\,\dt}^\frac{1}{r_1}\\
&\leq \Cc{c} \Bp{\int_{\R/{2\pi\Z}} \norm{\gamma_\beta *_{{\R/{2\pi\Z}}} F (t,\cdot)}_{q}^{r_1}\,\dt}^\frac{1}{r_1}\\
&\leq \Cc{c} \Bp{\int_{\R^n} \norm{\gamma_\beta *_{{\R/{2\pi\Z}}} F (x,\cdot)}_{r_1}^q\,\dx}^\frac{1}{q} \leq \Cc{c}\,\norm{F}_q \leq \Cc{c}\,\norm{u}_{1,2,q},
\end{align*}
where Minkowski's integral inequality is employed to conclude the second inequality above.
Classical Sobolev embedding yields $\grad\proj\uvel\in\LR{p_1}(\R^n)$ with $\norm{\grad\proj\uvel}_{p_1}\leq \Cc{c}\norm{\uvel}_{1,2,q}$. By the above,
it thus follows that $\norm{\grad\uvel}_{\LR{r_1}\np{{\R/{2\pi\Z}};\LR{p_1}(\Omega)}} \leq \Cc{c}\norm{\uvel}_{1,2,q}$. By interpolation, the same estimate follows 
for all $r_1,p_1\in[q,\infty)$ satisfying \eqref{SobEmbeddingThm_Condp1}.
In a similar manner, the estimate
$\norm{\uvel}_{\LR{r_0}\np{{\R/{2\pi\Z}};\LR{p_0}(\Omega)}}\leq\Cc{c}\norm{\uvel}_{1,2,q}$ 
can be shown for parameters $r_0,p_0\in[q,\infty)$ satisfying \eqref{SobEmbeddingThm_Condp0}. This concludes the theorem.
\end{proof}

\section{Linearized problem}

A suitable linearization of \eqref{intro_nspastbody} is given by the time-periodic Oseen system
\begin{align}\label{lin_linsystem}
\begin{pdeq}
&\partial_t\uvel  - \nu\Delta\uvel + \rey\partial_1\uvel + \grad\upres =  F && \tin\OmegaR,\\
&\Div\uvel =0 && \tin\OmegaR,\\
&\uvel=0&& \ton\partialOmegaR,\\
&\lim_{\snorm{x}\ra\infty} \uvel(t,x) = 0,\quad \uvel(t+\per,x)=\uvel(t),\quad \upres(t+\per,x)=\upres(t),
\end{pdeq}
\end{align} 
where $\rey>0$.
The goal in this section is to identify a Banach space $X$ of functions $(\uvel,\upres)$ satisfying \eqrefsub{lin_linsystem}{2-4}
such that the differential operator on the left-hand side in \eqrefsub{lin_linsystem}{1} 
\begin{align*}
\linoproseen (\uvel,\upres):=\partial_t\uvel + \rey\partial_1\uvel - \Delta\uvel + \grad\upres
\end{align*}
becomes a homeomorphism
$\linoproseen:X\ra\LRper{q}(\OmegaR)^3$. 
In other words, what can be referred to as ``maximal $\LR{q}$ regularity'' of the time-periodic system \eqref{lin_linsystem} shall 
be established.

The projections $\proj$ and $\projcompl$ shall be used to decompose \eqref{lin_linsystem} into two problems.
More specifically, for data $F\in\LRper{q}\np{\OmegaR}^3$ a solution to \eqref{lin_linsystem} is investigated 
as the sum of a solution corresponding to the steady-state part of the data $\proj F$, and a solution corresponding to the oscillatory part $\projcompl F$. We start with the latter and consider data in the space $\LRcomplper{q}\np{\OmegaR}^3$. 
In this case, appropriate $\LR{q}$ estimates can be established irrespectively of whether $\rey$ vanishes or not. The case $\rey=0$ is therefore included in the theorem below.  

\begin{thm}\label{lin_maxregcomplspacesThm}
Let $\Omega\subset\R^3$ be an exterior domain of class $\CR{2}$, 
$q\in(1,\infty)$ and $\rey\in[0,\rey_0]$.
For any vector field $\F\in\LRcomplper{q}(\OmegaR)^3$ there is a solution 
\begin{align}\label{lin_maxregcomplspacesThmSolSpace}
(\uvel,\upres)\in\WSRcomplper{1,2}{q}(\OmegaR)^3\times\DSRcomplper{1}{q}(\OmegaR)
\end{align}
to \eqref{lin_linsystem} 
which satisfies 
\begin{align}\label{ext_lqestThmEst}
\norm{\uvel}_{1,2,q} + \norm{\grad\upres}_{q} \leq \Cc[ext_lqestThmEstConst]{C} \norm{\F}_{q},
\end{align}
with $\Cclast{C}=\Cclast{C}(q,\Omega,\nu,\rey_0)$. 
If $r\in(1,\infty)$ and $(\tuvel,\tupres)\in\WSRcomplper{1,2}{r}(\OmegaR)^3\times\DSRcomplper{1}{r}(\OmegaR)$ is another 
solution, then $\tuvel=\uvel$ and $\tupres=\upres+d(t)$ for some $\per$-periodic function $d:\R\ra\R$.  
\end{thm}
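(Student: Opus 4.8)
The proof of Theorem~\ref{lin_maxregcomplspacesThm} naturally splits into three parts: a whole-space estimate, a bounded-domain estimate, and a cut-off/gluing argument combining the two, followed by a separate uniqueness argument. Since the data $\F$ is purely oscillatory, $\proj\F=0$, so one expects the solution to be purely oscillatory as well (this is exactly what the uniqueness statement forces). For the existence part, the first step is to recall (or establish) maximal $\LR{q}$ regularity for the time-periodic Oseen problem \emph{in the whole space} $\R\times\R^3$ for oscillatory data. This is where the absence of a singularity at $k=0$ in the relevant Fourier multiplier is crucial: after applying $\iFT_\grp$, the symbol governing $(\pt + \rey\partial_1 - \Delta)\projcompl\uvel = \projcompl\F$ is, schematically, $\frac{(\pt+\rey\partial_1-\Delta)}{\pt+\rey\partial_1-\Delta}$ restricted to $k\neq 0$, and the relevant multipliers such as $\frac{\snorm{\xi}^2(1-\delta_\Z(k))}{\snorm{\xi}^2+i\rey\xi_1+i\perf k}$ and $\frac{\rey\xi_1(1-\delta_\Z(k))}{\snorm{\xi}^2+i\rey\xi_1+i\perf k}$ are smooth and satisfy Marcinkiewicz/Mikhlin-type bounds uniformly for $\rey\in[0,\rey_0]$ — since the denominator never vanishes when $(k,\xi)\neq(0,0)$. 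By the transference principle used in the proof of Theorem~\ref{SobEmbeddingThm} (and in~\cite{KyedMaxregTPNS2014}), these are $\LR{q}(\grp)$ multipliers for $q\in(1,\infty)$, yielding $\uvel\in\WSRcomplper{1,2}{q}(\R\times\R^3)^3$ and $\grad\upres\in\LRcomplper{q}(\R\times\R^3)^3$ with the corresponding estimate, uniform in $\rey$.

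The second step is the interior estimate. On the bounded domain $\Omega_{\Rzero}$ one uses maximal $\LR{q}$ regularity for the time-periodic \emph{Stokes} (or Oseen, treating $\rey\partial_1$ as a lower-order perturbation absorbable for bounded $\rey_0$) problem with homogeneous Dirichlet data — available from the generator property of the Stokes operator on $\LRsigma{q}(\Omega_{\Rzero})$ together with the transference principle for time-periodic problems driven by an $R$-sectorial operator, again restricted to oscillatory data so that no compatibility/solvability obstruction arises. The third and technically most delicate step is the gluing: take a cut-off $\chi$ that is $1$ near $\partial\Omega$ and supported in $\B_{\Rzero}$, write $\uvel = \chi\uvel_{\mathrm{int}} + (1-\chi)\uvel_{\mathrm{ext}}$ where $\uvel_{\mathrm{int}}$ solves the interior problem and $\uvel_{\mathrm{ext}}$ the whole-space problem, and correct the divergence produced by the commutator terms $\uvel\cdot\grad\chi$ using the Bogovski\u\i{} operator on the annulus $\Omega_{\Rstar,\Rzero}$ — applied pointwise in $t$, which preserves the oscillatory property and time-periodicity. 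The resulting corrector has lower-order (compactly supported) right-hand sides, and one closes the estimate by a standard compactness/contradiction argument or by iterating, using that the error terms are compact relative to the leading operator. I expect \textbf{this gluing step to be the main obstacle}, precisely because one must verify that every operation — the interior solve, the Bogovski\u\i{} correction, the cut-off multiplication — respects the constraint $\proj(\cdot)=0$, so that the final solution genuinely lies in the purely oscillatory spaces~\eqref{lin_maxregcomplspacesThmSolSpace} and the constants remain uniform for $\rey\in[0,\rey_0]$ (including $\rey=0$).

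For the uniqueness statement, suppose $(\tuvel,\tupres)\in\WSRcomplper{1,2}{r}(\OmegaR)^3\times\DSRcomplper{1}{r}(\OmegaR)$ is another solution for some $r\in(1,\infty)$. Then $w:=\tuvel-\uvel$ and $\varpi:=\tupres-\upres$ solve the homogeneous system \eqref{lin_linsystem} with $\F=0$. Projecting onto time-averages, $\proj w$ solves the homogeneous steady Oseen problem in the homogeneous Sobolev class and vanishes — but in fact $\proj w = 0$ already holds by hypothesis since both solutions are oscillatory. So it remains to show that an oscillatory solution $w$ of the homogeneous time-periodic Oseen system is zero. One expands $w$ and $\varpi$ in Fourier series in $t$; each nonzero mode $w_k$ (with $k\neq0$) solves a resolvent-type Oseen problem $i\perf k\, w_k + \rey\partial_1 w_k - \Delta w_k + \grad\varpi_k = 0$ in $\Omega$ with $w_k|_{\partial\Omega}=0$ and $w_k\to0$ at infinity. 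Because $i\perf k$ lies in the resolvent set (it is purely imaginary and nonzero, hence outside the spectrum of the Oseen operator, which is contained in a half-plane $\realpart\,z\le 0$ shifted appropriately — or one argues directly via the energy identity after testing with $\overline{w_k}$, the term $i\perf k\|w_k\|_2^2$ having nonzero imaginary part forcing $w_k=0$), each mode vanishes, hence $w\equiv0$ and $\varpi$ is spatially constant, i.e. $\varpi = d(t)$ for some $\per$-periodic $d$; but the normalization $\int_{\Omega_{\Rzero}}\varpi\,\dx$ defining $\DSRcomplper{1}{r}$ is compatible with any such $d$, which is why $d$ appears in the statement. The regularity $w\in\WSR{1,2}{r}$ across different exponents $r$ causes no difficulty since on bounded subdomains all the relevant quantities lie in every $\LR{s}$ by elliptic regularity of each mode, so the two solutions can be compared irrespective of their integrability exponents.
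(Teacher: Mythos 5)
Your high-level architecture—whole-space $L^q$ multiplier estimate, bounded-domain $L^q$ estimate, cut-off gluing with a Bogovski\u{\i} correction, and removal of lower-order terms by compactness—does match the skeleton of the paper's proof. But the paper takes a different route at each step, and at least one of those places hides a genuine gap in your proposal. The paper does not attempt a direct $L^q$ parametrix: it constructs the $L^2$ solution first via Fourier series and Parseval (Lemma~\ref{lin_maxregcomplspacesq2Lem}), derives an a priori estimate of the form $\norm{\uvel}_{1,2,s}\leq \Ccn{c}\bp{\norm{F}_s + \text{lower-order terms on } \Omega_\Rzero}$ valid for all $s$, and then \emph{bootstraps} regularity from $s=2$ up using the embedding Theorem~\ref{SobEmbeddingThm}, before invoking uniqueness and a contradiction argument. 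For the bounded-domain estimate the paper does not use $R$-sectoriality and transference as you propose; Lemma~\ref{lin_maxregboundeddomainThm} instead decomposes the periodic solution into a homogeneous Stokes semigroup piece and an initial-value solution, and exploits periodicity by averaging the estimate over $m$ periods (see \eqref{bd_lqestThm_LqEstGaldiTrick}). Your uniqueness argument via Fourier modes and the energy identity is also different from the paper's duality argument \eqref{lin_maxregcomplspacesThm_dualitycomputation}; yours is morally correct (taking real parts kills $\norm{\grad w_k}_2$), but you would still need to justify that $w_k,\grad w_k\in\LR{2}(\Omega)$ and that the boundary terms at infinity vanish for solutions belonging only to $\WSR{1,2}{q}+\WSR{1,2}{r}$ with possibly large $q,r$, which is not automatic; the paper's duality argument sidesteps this by testing against a smooth adjoint solution whose existence has already been established.

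The concrete gap: you claim "the resulting corrector has lower-order (compactly supported) right-hand sides" and declare the error operator compact, but the cut-off commutator terms include $\grad\cutoff\,\upres$, which requires control of $\norm{\upres}_{q,\Omega_\Rzero}$ (the pressure itself, not $\grad\upres$) on a fixed annulus. This quantity is \emph{not} controlled by the data $F$ or by $\grad\upres\in\LR{q}$ alone, and obtaining such control is exactly the content of the paper's Lemma~\ref{lin_estofpressureLem}, which establishes \eqref{lin_estofpressureLem_est1} via a careful duality argument for the weak Neumann problem, an asymptotic expansion of harmonic functions, and trace inequalities. Without this local pressure estimate you cannot verify compactness of the error terms, nor run the contradiction argument, nor close the bootstrap. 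You also misidentify the main obstacle: preservation of $\proj(\cdot)=0$ is straightforward (all the operations involved commute with $\proj$ and $\projcompl$, as you yourself observe); the real obstacle is exactly this local pressure bound, which your proposal does not address.
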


The proof of Theorem \ref{lin_maxregcomplspacesThm} will be based on three lemmas. The first lemma states that the theorem holds in the case $q=2$.

\begin{lem}\label{lin_maxregcomplspacesq2Lem} 
Let $\Omega$ and $\rey$ be as in Theorem \ref{lin_maxregboundeddomainThm}. For any $F\in\LRcomplper{2}(\OmegaR)^3$ there is a solution $(\uvel,\upres)\in\WSRcomplper{1,2}{2}(\OmegaR)^3\times\DSRcomplper{1}{2}(\OmegaR)$ to \eqref{lin_linsystem}. Moreover, the solution obeys the estimate
\begin{align}\label{lin_maxregcomplspacesq2LemEst}
\norm{\uvel}_{1,2,2} + \norm{\grad\upres}_{2} \leq \Cc{C}\,\norm{F}_{2},
\end{align}
with $\Cclast{C}=\Cclast{C}(\rey_0,\per,\Omega)$
\end{lem}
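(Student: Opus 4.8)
The plan is to establish Lemma \ref{lin_maxregcomplspacesq2Lem} by working on the Fourier side with respect to time, decomposing the oscillatory field into its temporal Fourier modes, and solving for each mode a \emph{stationary} resolvent-type Oseen problem in the exterior domain $\Omega$; the $q=2$ restriction is precisely what makes the Fourier/Plancherel machinery available. Writing $\uvel = \sum_{k\neq 0} \uvelcoef{k}(x)\,\e^{\perf i k t}$ and similarly for $\upres$ and $F$, the system \eqref{lin_linsystem} is equivalent to the family of stationary problems
\begin{align*}
\begin{pdeq}
&\perf i k\,\uvelcoef{k} - \nu\Delta\uvelcoef{k} + \rey\partial_1\uvelcoef{k} + \grad\uprescoef{k} = \Ffcoef_k && \tin\Omega,\\
&\Div\uvelcoef{k} = 0 && \tin\Omega,\\
&\uvelcoef{k} = 0 && \ton\partial\Omega,\quad \uvelcoef{k}\to 0 \ \tas\ \snorm{x}\to\infty,
\end{pdeq}
\end{align*}
for $k\in\Z\setminus\{0\}$. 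First I would invoke the known $\LR{2}$ resolvent theory for the stationary Oseen (and Stokes) operator in exterior domains to obtain, for each $k$, a unique solution $(\uvelcoef{k},\grad\uprescoef{k})\in \WSR{2}{2}(\Omega)^3\times\LR{2}(\Omega)^3$, together with a resolvent estimate. The essential point is to track the $k$-dependence of the constants: because $\snorm{\perf k}$ is bounded away from $0$ for $k\neq 0$, one expects a uniform bound of the form $\snorm{\perf k}\,\norm{\uvelcoef{k}}_2 + \norm{\grad^2\uvelcoef{k}}_2 + \rey\norm{\partial_1\uvelcoef{k}}_2 + \norm{\grad\uprescoef{k}}_2 \leq \Ccn{C}\,\norm{\Ffcoef_k}_2$ with $\Ccn{C}$ independent of $k$ (and of $\rey\in[0,\rey_0]$), which is exactly maximal regularity modewise.

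Next I would assemble the modes: squaring the uniform mode estimate, summing over $k\in\Z\setminus\{0\}$, and applying Plancherel's theorem in the time variable turns the left-hand sides into $\norm{\pt\uvel}_2^2 + \norm{\grad^2\uvel}_2^2 + \rey^2\norm{\partial_1\uvel}_2^2 + \norm{\grad\uprescoef{}}_2^2$ (up to the harmless lower-order term $\norm{\uvel}_2$, which is controlled by interpolation or by the exterior-domain Poincaré-type inequality for $\WSR{2}{2}$-fields vanishing on $\partial\Omega$, using that $\Omega$ is an exterior domain so the embedding $\WSR{2}{2}\embeds\LR{6}$ plus $\DSR{1}{2}\embeds\LR{6}$ closes the estimate). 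This yields $(\uvel,\upres)\in\WSRcomplper{1,2}{2}(\OmegaR)^3\times\DSRcomplper{1}{2}(\OmegaR)$ together with \eqref{lin_maxregcomplspacesq2LemEst}; the membership in the $\projcompl$-spaces is automatic since only $k\neq 0$ modes appear, so $\proj\uvel=0$ and $\proj\upres=0$. One must also check that the normalizing term $\iper\int_0^\per\snorm{\int_{\Omega_\Rzero}\upres\,\dx}\,\dt$ in the $\DSRper{1}{2}$-norm is finite, which follows from $\grad\upres\in\LR{2}$ and the local Poincaré inequality on $\Omega_\Rzero$ (one may fix the additive constant in each $\uprescoef{k}$ by requiring $\int_{\Omega_\Rzero}\uprescoef{k}\,\dx=0$).

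The main obstacle is the uniform-in-$k$ maximal regularity estimate for the stationary Oseen resolvent problem in the \emph{exterior domain}: while the analogous bound in $\R^3$ follows directly from Fourier-multiplier analysis (the symbol $(\perf i k + \nu\snorm{\xi}^2 + \rey i \xi_1)^{-1}$ and its derivatives being $\LR{2}$-multipliers with the right homogeneity), transferring it to $\Omega$ requires the standard localization argument: cut-off functions splitting the problem into a bounded-domain piece (handled by the resolvent estimate for the stationary Stokes/Oseen operator on a bounded domain, e.g.\ a large ball, which generates an analytic semigroup so the resolvent bound is classical) and a whole-space piece, correcting the divergence produced by the cut-off via the Bogovski\u{\i} operator on the annular region $\Omega_{\Rstar,\Rzero}$, and absorbing the resulting lower-order remainder terms. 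Keeping all constants in this localization independent of $k$ — in particular ensuring the Bogovski\u{\i} correction and the commutator terms do not reintroduce $k$-dependence — is the delicate bookkeeping step; since $\snorm{\perf k}\geq \perf$ for $k\neq 0$, the zeroth-order term in the resolvent actually helps rather than hurts, so no small-frequency degeneracy occurs and the argument closes. I would refer to the whole-space time-periodic Oseen analysis and to the exterior-domain stationary theory cited in the introduction for the technical ingredients, and present the mode-by-mode localization as the core of the proof.
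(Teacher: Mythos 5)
Your proof follows essentially the same route as the paper's: expand in temporal Fourier modes $k\in\Z\setminus\{0\}$, establish for each mode a uniform-in-$k$ $L^2$ resolvent estimate for the stationary Oseen problem in $\Omega$, and assemble the solution via Parseval, with the $\projcompl$-membership coming for free since the $k=0$ mode is absent. The paper's proof is itself only a sketch (adapted from Lemma 5 of \cite{GaldiTPFlowMovingCylinder}) that defers the modewise estimate to ``standard methods from the theory on elliptic systems,'' and your localization-with-Bogovski\u{\i} description is a reasonable way to fill that in, including the correct observation that $|\perf k|\geq\perf$ removes any small-frequency degeneracy.
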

\begin{proof}
The proof in \cite[Lemma 5]{GaldiTPFlowMovingCylinder} can easily be adapted to establish the desired statement. For the sake of completeness, a sketch of the proof is given here. For a Hilbert space $H$, we introduce the function space $\LRper{2}\bp{\R;H}$ whose elements are the $\per$-periodic
extensions of the functions in $\LR{2}\bp{(0,\per);H}$. Classical theory on Fourier series, in particular the theorem of 
Parseval, is available for such spaces. Since clearly $\LRper{2}(\OmegaR)=\LRper{2}\bp{\R;\LR{2}(\Omega)}$, we 
may express the data $F$ as a Fourier series $F=\sum_{k\in\Z} F_k \e^{i\perf kt}$ with Fourier coefficients 
$F_k\in\LR{2}(\Omega)^3$. Since $\proj F=0$, it follows that $F_0=0$. 
Consider for each $k\in\Z\setminus\set{0}$  the system 
\begin{align*}
\begin{pdeq}
&ik\perf \uvel_k -\nu \Delta \uvel_k + \rey \partial_1 \uvel_k + \grad \upres_k = F_k && \tin\Omega,\\
&\Div \uvel_k = 0 && \tin\Omega,\\
&\uvel_k =0 && \ton \partial\Omega.
\end{pdeq}
\end{align*}
Standard methods from the theory on elliptic systems can be employed to investigate this problem. As a result, existence of 
a solution $(\uvel_k,\upres_k)\in\WSR{2}{2}(\Omega)\times\bp{\DSR{1}{2}(\Omega)\cap\LR{6}(\Omega)}$ which satisfies
\begin{align}\label{lin_maxregcomplspacesq2Lem_FCest}
\perf\snorm{k}\norm{\uvel_k}_2 + \norm{\grad^2\uvel_k}_{2} + \norm{\grad\upres_k}_2 \leq \Cc{c} \, \norm{F_k}_2,
\end{align}
with $\Cclast{c}$ independent on $k$, can be established.
Observe that $\DSR{1}{2}(\Omega)\cap\LR{6}(\Omega)$ is a Hilbert space in the norm $\norm{\grad\cdot}_2$. Thus, by
\eqref{lin_maxregcomplspacesq2Lem_FCest} and Parseval's theorem, the vector fields 
\begin{align*}
\uvel:=\sum_{k\in\Z\setminus\set{0}} \uvel_k\,\e^{i\perf kt},\quad 
\upres:=\sum_{k\in\Z\setminus\set{0}} \upres_k\,\e^{i\perf kt},\quad 
\end{align*}
are well-defined as elements of $\LRper{2}\bp{\R;\WSR{2}{2}(\Omega)}$ and $\LRper{2}\bp{\R;\DSR{1}{2}(\Omega)}$, respectively, 
with $\partial_t\uvel\in\LRper{2}\bp{\R;\LR{2}(\Omega)}$. Parseval's theorem yields
\begin{align*}
\norm{\partial_t\uvel}_{\LRper{2}\np{\R;\LR{2}(\Omega)}}+ \norm{\uvel}_{\LRper{2}\np{\R;\WSR{2}{2}(\Omega)}}+\norm{\grad\upres}_{\LRper{2}\np{\R;\LR{2}(\Omega)}} \leq \Cc{c} \norm{F}_{\LRper{2}\np{\R;\LR{2}(\Omega)}}.
\end{align*}
Finally observe that $\proj\uvel=\proj\upres=0$ as both Fourier coefficients $\uvel_0$ and $\upres_0$ vanish by definition of $\uvel$
and $\upres$. We thus conclude that $(\uvel,\upres)\in\WSRcomplper{1,2}{2}(\OmegaR)^3\times\DSRcomplper{1}{2}(\OmegaR)$ and satisfies
\eqref{lin_maxregcomplspacesq2LemEst}. By construction, $(\uvel,\upres)$ is a solution to \eqref{lin_linsystem}.
% By going through the proof of \cite[Lemma 5]{GaldiTPFlowMovingCylinder}, it is easy to verify that all conclusions are valid for an exterior domain $\Omega\subset\R^n$ of any dimension $n\geq 2$. Moreover, by construction of the solution $\uvel$, it is 
% also easy to see that $\uvel$ can be approximated in the norm $\norm{\cdot}_{1,2,2}$ by vector-fields from $\CRciper(\R\times\overline{\Omega})$, that is, $\uvel\in\WSRper{1,2}{2}(\OmegaR)$. 
\end{proof}

The next lemma states that the assertions in Theorem \ref{lin_maxregcomplspacesThm} are valid if $\Omega$ is replaced with a bounded domain.
\begin{lem}\label{lin_maxregboundeddomainThm}
Let $\boundeddomain\subset\R^3$ be a bounded domain of class $\CR{2}$, 
$q\in(1,\infty)$ and $\rey\in[0,\rey_0]$.
For any vector field $\F\in\LRcomplper{q}(\boundeddomainR)^3$ there is a solution 
\begin{align}\label{lin_maxregboundeddomainThmSolSpace}
(\uvel,\upres)\in\WSRcomplper{1,2}{q}(\boundeddomainR)^3\times\DSRcomplper{1}{q}(\boundeddomainR)
\end{align}
to 
\begin{align}\label{lin_maxregboundeddomainThm_eqboundeddomain}
\begin{pdeq}
&\partial_t\uvel  - \nu\Delta\uvel + \rey\partial_1\uvel + \grad\upres =  F && \tin\boundeddomainR,\\
&\Div\uvel =0 && \tin\boundeddomainR,\\
&\uvel=0&& \ton\partialboundeddomainR,\\
\end{pdeq}
\end{align}  
which satisfies 
\begin{align}\label{lin_maxregboundeddomainThmEst}
\norm{\uvel}_{1,2,q} + \norm{\grad\upres}_{q} \leq \Cc[lin_maxregboundeddomainThmConst]{C} \norm{\F}_{q},
\end{align}
with $\Cclast{C}=\Cclast{C}(q,\boundeddomain,\nu,\rey_0)$. 
If $r\in(1,\infty)$ and $(\tuvel,\tupres)\in\WSRcomplper{1,2}{r}(\boundeddomainR)^3\times\DSRcomplper{1}{r}(\boundeddomainR)$ is another 
solution, then $\tuvel=\uvel$ and $\tupres=\upres+d(t)$ for some $\per$-periodic function $d:\R\ra\R$.  
\end{lem}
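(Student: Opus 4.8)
The plan is to reduce the time-periodic bounded-domain problem \eqref{lin_maxregboundeddomainThm_eqboundeddomain} to a family of stationary resolvent problems via Fourier series in time, exactly as in the proof of Lemma \ref{lin_maxregcomplspacesq2Lem}, but now working in $\LR{q}$ rather than $\LR{2}$, where Parseval's theorem is no longer available. Since the differential operator has constant coefficients in time and the domain is bounded, I would identify $\LRper{q}(\boundeddomainR)$ with $\LR{q}\bp{\R/\per\Z;\LR{q}(\boundeddomain)}$ and decompose the data $\F$ as a Fourier series $\F=\sum_{k\in\Z}\F_k\e^{i\perf k t}$. Because $\proj\F=0$ we have $\F_0=0$, so only the modes $k\in\Z\setminus\set{0}$ enter. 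For each such $k$ one is led to the generalized Stokes (Oseen) resolvent problem
\begin{align*}
ik\perf\uvel_k-\nu\Delta\uvel_k+\rey\partial_1\uvel_k+\grad\upres_k=\F_k,\quad \Div\uvel_k=0\ \tin\boundeddomain,\quad \uvel_k=0\ \ton\partial\boundeddomain,
\end{align*}
which by the classical $\LR{q}$ theory for the Stokes resolvent in bounded domains (treating the lower-order term $\rey\partial_1\uvel_k$ as a perturbation absorbed for $\rey\in[0,\rey_0]$, or equivalently invoking Oseen-resolvent estimates directly) has a unique solution $(\uvel_k,\upres_k)\in\WSR{2}{q}(\boundeddomain)^3\times\bp{\WSR{1}{q}(\boundeddomain)/\R}$ with the resolvent estimate
\begin{align*}
\snorm{k}\,\norm{\uvel_k}_q+\norm{\grad^2\uvel_k}_q+\norm{\grad\upres_k}_q\leq \Ccn{c}\,\norm{\F_k}_q
\end{align*}
uniformly in $k$ and in $\rey\in[0,\rey_0]$. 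The estimate for $\snorm{k}$ near $0$ needs the extra structure that $\F_0=0$ and $k$ is a nonzero integer, so $\snorm{k}\geq 1$; uniformity for large $\snorm{k}$ is the genuine resolvent bound.

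The main obstacle is passing from these per-mode estimates to an $\LR{q}$ estimate on the assembled functions $\uvel:=\sum_{k\neq 0}\uvel_k\e^{i\perf k t}$ and $\upres:=\sum_{k\neq 0}\upres_k\e^{i\perf k t}$, since for $q\neq 2$ one cannot simply square-sum. The standard resolution, which I would follow as in \cite{KyedMaxregTPNS2014}, is to recast the solution operator as a Fourier multiplier on the locally compact abelian group $\grp=\bp{\R/\per\Z}\times\boundeddomain$ — more precisely, on $\R/\per\Z$ with values in the Banach space $\LR{q}(\boundeddomain)$ — and invoke an operator-valued multiplier theorem. On the group $\R/\per\Z$ the dual is $\Z$, which is discrete, so one does not need the full $\calr$-boundedness machinery of the Weis/Arendt-Bu theorem: a \emph{discrete} (Marcinkiewicz-type / transference) multiplier theorem suffices, requiring only uniform boundedness of the symbols $k\mapsto \snorm{k}\bp{ik\perf-\nu\Delta+\rey\partial_1}^{-1}\proj^{\mathrm{Helm}}$ and their first differences as operators on $\LR{q}(\boundeddomain)^3$. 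Both follow from the uniform resolvent estimate above together with the resolvent identity, which controls $\uvel_{k+1}-\uvel_k$. This yields $\norm{\partial_t\uvel}_q+\norm{\grad^2\uvel}_q+\norm{\grad\upres}_q\leq\Ccn{c}\norm{\F}_q$, and since the zero mode is absent, $\norm{\uvel}_q$ is controlled by Poincaré in time (or directly, as $\uvel$ has vanishing time-average), giving the full norm $\norm{\uvel}_{1,2,q}$.

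It remains to verify that the resulting $(\uvel,\upres)$ actually lies in the claimed completed spaces $\WSRcomplper{1,2}{q}(\boundeddomainR)^3\times\DSRcomplper{1}{q}(\boundeddomainR)$ — this is immediate from the a priori estimate by approximating $\F$ by trigonometric polynomials with $\CRci$ coefficients — that it solves \eqref{lin_maxregboundeddomainThm_eqboundeddomain} in the sense of distributions, which holds mode-by-mode and passes to the limit, and that $\proj\uvel=\proj\upres=0$, which is built in since all zero modes vanish. For uniqueness, suppose $(\tuvel,\tupres)\in\WSRcomplper{1,2}{r}(\boundeddomainR)^3\times\DSRcomplper{1}{r}(\boundeddomainR)$ is another solution for some $r\in(1,\infty)$; taking the difference and projecting onto each temporal Fourier mode gives, for each $k\neq 0$, a homogeneous Oseen resolvent problem on $\boundeddomain$ whose only solution (in $\WSR{2}{r}(\boundeddomain)$, after using that $ik\perf$ lies in the resolvent set of the Oseen operator for $k\neq 0$) is $\uvel_k=0$ and $\grad\upres_k=0$; hence $\tuvel=\uvel$ and $\tupres-\upres$ depends only on $t$, i.e. equals some $\per$-periodic $d(t)$. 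The case $\rey=0$ is included throughout since no estimate used here degenerates as $\rey\to 0$; this is precisely why the oscillatory part of the problem, unlike the steady-state part, does not require $\rey>0$.
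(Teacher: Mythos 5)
Your route is genuinely different from the paper's. The paper starts from the $\LR{2}$ solution supplied by Lemma~\ref{lin_maxregcomplspacesq2Lem}, chooses a good time $t_0$, decomposes $\uvel$ on $(t_0,\infty)$ into the free Stokes semigroup orbit $\vvel(t)=\exp\bp{A(t-t_0)}\uvel(t_0)$ plus the Giga--Sohr maximal-regularity solution $\wvel$ of the forced Cauchy problem with vanishing initial data, and then exploits $\per$-periodicity by averaging the resulting estimate over $m$ periods and letting $m\to\infty$ so that the non-integrable $(t-t_0)^{-1}$ contribution from the semigroup part washes out; uniqueness is proved by a duality argument against the adjoint time-periodic problem rather than mode-by-mode. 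Your temporal-Fourier / operator-valued-multiplier route stays entirely within the periodic setting and never touches the Cauchy problem, which is conceptually cleaner, and your resolvent reduction and the observation that $\snorm{k}\geq 1$ because $F_0=0$ are both correct.

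However, there is a genuine gap at the multiplier step. You claim that since the dual group $\Z$ is discrete, ``a Marcinkiewicz-type / transference multiplier theorem suffices, requiring only uniform boundedness of the symbols $k\mapsto \snorm{k}\bp{ik\perf-\nu\Delta+\rey\partial_1}^{-1}$ and their first differences as operators on $\LR{q}(\boundeddomain)^3$.'' That is not true for $q\neq 2$. The operator-valued Marcinkiewicz theorem on $\LR{q}(\torus;X)$ for a UMD space $X$ (Arendt--Bu) requires $\calr$-boundedness of $\set{M_k}_k$ and of $\set{k(M_{k+1}-M_k)}_k$, not merely uniform operator-norm bounds, and on $\LR{q}(\boundeddomain)$ with $q\neq 2$ uniform boundedness does \emph{not} imply $\calr$-boundedness; there are standard counterexamples to the multiplier conclusion when only uniform bounds are assumed. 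The gap is fixable: $\calr$-sectoriality of the Stokes operator on $\LRsigma{q}(\boundeddomain)$ in a bounded $\CR{2}$ domain is a known, nontrivial theorem, and with it in hand the Arendt--Bu theorem delivers exactly the assembly estimate you want, absorbing the lower-order term $\rey\partial_1$ by perturbation as you suggest. But as written, the central step is unjustified, and the distinction between uniform boundedness and $\calr$-boundedness is precisely what makes operator-valued multiplier theory harder than its scalar counterpart. Note also that the paper's own transference argument (in Theorem~\ref{SobEmbeddingThm}) is for \emph{scalar} symbols on $\Z\times\R^n$, where the whole-space operator is diagonalized by the spatial Fourier transform; in the bounded-domain setting that structure is absent, so transference cannot be imported cheaply.
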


\begin{proof}
One may verify that the proof in \cite[Lemma 9]{GaldiTPFlowMovingCylinder} for a two-dimensional domain also holds for a three-dimensional
domain. For the sake of completeness, a sketch is given here. 
By density of $\CRcicomplper(\boundeddomain)$ in $\LRcomplper{q}\np{\boundeddomainR}$, it 
suffices to consider only $\F\in\CRcicomplper(\boundeddomain)^3$.
Starting point is a solution 
$(\uvel,\upres)\in\WSRcomplper{1,2}{2}(\boundeddomainR)^3\times\DSRcomplper{1}{2}(\boundeddomainR)$, the existence of which can be shown
by the same methods as in the proof of Lemma \ref{lin_maxregcomplspacesq2Lem}.   
By Sobolev's embedding theorem, it may be assumed that this solution is continuous in the sense that 
$\uvel\in\CRper\bp{\R;\LRsigma{2}(\boundeddomain)}$. Clearly, for this solution a $t_0\in[0,\per]$ can be chosen such that $\uvel(t_0)\in\WSR{2}{2}(\boundeddomain)$ 
and $\Div\uvel(t_0)=0$. By Sobolev's embedding theorem, it follows that $\uvel(t_0)\in\LRsigma{q}(\boundeddomain)$. 
Consider now the initial-value problem 
\begin{align}\label{bd_galdirepZeroRHSIVP}
\begin{pdeq}
&\partial_t\vvel  = \nu\Delta\vvel - \grad\vpres  && \tin(t_0,\infty)\times\boundeddomain,\\
&\Div\vvel =0 && \tin(t_0,\infty)\times\boundeddomain,\\
&\vvel=0&& \ton(t_0,\infty)\times\partial\boundeddomain,\\
&\vvel(t_0,\cdot)=\uvel(t_0,\cdot) && \tin\boundeddomain.
\end{pdeq}
\end{align} 
It is well known that the Stokes operator $A:=\hproj\Delta$ generates a
bounded analytic semi-group on $\LRsigma{q}(\boundeddomain)$; see \cite{Giga_StokesSemigroupAna81}. Consequently, the solution to
the initial-value problem \eqref{bd_galdirepZeroRHSIVP}
given by $\vvel:=\exp\bp{A(t-t_0)}\uvel(t_0)$ satisfies
\begin{align}\label{bd_lqestThm_LqEstvvel}
\forall t>t_0:\quad \norm{\partial_t\vvel(t)}_q + \norm{A\vvel(t)}_q  \leq \Cc{c}\, (t-t_0)^{-1}; 
\end{align}
see for example \cite[Theorem II.4.6]{NagelEngelBook}.
Also by classical results, see for example \cite[Theorem 2.8]{GigaSohrLpEst91}, there exists a solution 
$(\wvel,\wpres)\in\WSR{1,2}{q}\bp{(t_0,\infty)\times\boundeddomain}\times\WSR{0,1}{q}\bp{(t_0,\infty)\times\boundeddomain}$ 
to 
\begin{align}\label{bd_galdirepZeroIVIVP}
\begin{pdeq}
&\partial_t\wvel  = \nu\Delta\wvel - \grad\wpres + \F && \tin(t_0,\infty)\times\boundeddomain,\\
&\Div\wvel =0 && \tin(t_0,\infty)\times\boundeddomain,\\
&\wvel=0&& \ton(t_0,\infty)\times\partial\boundeddomain,\\
&\wvel(t_0,\cdot)=0 && \tin\boundeddomain
\end{pdeq}
\end{align}  
that is continuous in the sense $\wvel\in\CRper{}\bp{[t_0,\infty);\LRsigma{q}(\boundeddomain)}$ and satisfies
\begin{align}\label{bd_lqestThm_LqEstwvel}
\forall \tau\in(t_0,\infty):\quad \norm{\wvel}_{\WSR{1,2}{q}\np{(t_0,\tau)\times\boundeddomain}} + \norm{\wpres}_{\WSR{0,1}{q}\np{(t_0,\tau)\times\boundeddomain}}  \leq \Cc[bd_lqestThm_LqEstwvelConst]{c}\, 
\norm{\F}_{\LR{q}\np{(t_0,\tau)\times\boundeddomain}}
\end{align}
with $\const{bd_lqestThm_LqEstwvelConst}$ \emph{independent} on $\tau$. Since $\uvel$ and $\vvel+\wvel$ solve the same initial-value problem, a standard uniqueness
argument implies $\uvel=\vvel+\wvel$. Due to the $\per$-periodicity of $\uvel$ and $\F$, it follows for all $m\in\N$ that 
\begin{align}\label{bd_lqestThm_LqEstGaldiTrick}
\begin{aligned}
\int_0^{\per} \norm{\partial_t\uvel(t)}_q^q + \norm{A\uvel(t)}_q^q\,\dt &=
\frac{1}{m}\int_{2\per}^{\np{m+2}\per} \norm{\partial_t\uvel(t)}_q^q + \norm{A\uvel(t)}_q^q\,\dt\\
&\leq \Cc{c}\frac{1}{m}\int_\per^\infty t^{-q}\,\dt+  
\Cc[bd_lqestThmTmpConst1]{c}\frac{1}{m}\norm{\F}^q_{\LR{q}\np{(0,(m+1)T)\times\boundeddomain}}\\
&\leq \Cc{c}\frac{1}{m} \frac{1}{q-1} \per^{1-q}+  
\const{bd_lqestThmTmpConst1} \frac{m+1}{m}\norm{\F}^q_{\LR{q}\np{(0,T)\times\boundeddomain}}.
\end{aligned}
\end{align}
Now let $m\ra\infty$ to conclude
\begin{align*}
\norm{\partial_t\uvel}_{\LRper{q}(\boundeddomainR)} + \norm{A\uvel}_{\LRper{q}(\boundeddomainR)} \leq \Cc{c} \norm{\F}_{\LRper{q}(\boundeddomainR)}.
\end{align*}
The estimate $\norm{\grad^2\uvel}_{\LRper{q}(\boundeddomainR)}\leq \Cc{c}\norm{A\uvel}_{\LRper{q}(\boundeddomainR)}$ is a consequence of well-known $\LR{q}$ theory for the Stokes problem in bounded domains; see for example \cite[Theorem IV.6.1]{GaldiBookNew}. 
Consequently, the estimate  
$\norm{\uvel}_{1,2,q} \leq \Cc{c} \norm{\F}_{q}$ follows by  
employing Poincar\'{e}'s inequality $\norm{\uvel}_q\leq \Cc{c}\,\norm{\partial_t{\uvel}}_q$. 
Now modify the pressure $\upres$ by adding a function depending only on $t$ such that $\int_\boundeddomain \upres(t,x)\,\dx=0$, which
ensures the validity of Poincar\'{e}'s inequality for $\upres$.
A similar estimate is then obtained for $\upres$ by
isolating $\grad\upres$ in \eqrefsub{lin_maxregboundeddomainThm_eqboundeddomain}{1}. This establishes \eqref{lin_maxregboundeddomainThmEst}.
To show the statement of uniqueness, a duality argument can be employed. 
For this purpose, let $\testphi\in\CRciper(\boundeddomainR)$ and let $(\adjvel,\adjpres)$ be a solution to the problem
\begin{align}\label{bd_linnsadj}
\begin{pdeq}
&\partial_t\adjvel  = -\nu\Delta\adjvel - \grad\upres + \testphi && \tin\boundeddomainR,\\
&\Div\adjvel =0 && \tin\boundeddomainR,\\
&\adjvel=0&& \ton\partialboundeddomainR,\\
&\adjvel(t+\per,x)=\adjvel(t,x)
\end{pdeq}
\end{align} 
adjoint to \eqref{lin_maxregboundeddomainThm_eqboundeddomain}. The existence of a solution $(\adjvel,\adjpres)$ follows by the same arguments from above that yield a solution to \eqref{lin_maxregboundeddomainThm_eqboundeddomain}. 
Since $\testphi\in\CRciper(\boundeddomainR)$, the solution satisfies $(\adjvel,\adjpres)\in\WSRper{1,2}{s}(\boundeddomainR)\times\WSRper{1}{s}(\boundeddomainR)$ for all $s\in(1,\infty)$.
The regularity of $(\adjvel,\adjpres)$ ensures validity of the following computation:
\begin{align*}
\int_0^\per\int_\boundeddomain (\wvel-\twvel)\cdot \testphi\,\dx\dt 
&= \int_0^\per\int_\boundeddomain (\wvel-\twvel)\cdot (\partial_t\adjvel +\nu\Delta\adjvel +\grad\upres)\,\dx\dt  \\
&= \int_0^\per\int_\boundeddomain \bp{\partial_t\nb{\wvel-\twvel}-\nu\Delta\nb{\wvel-\twvel}+\grad\nb{\wpres-\twpres}}\cdot \adjvel\,\dx\dt=0.
\end{align*}
Since $\testphi\in\CRciper(\boundeddomainR)$ was arbitrary, $\twvel-\wvel=0$ follows. In turn, $\grad\wpres=\grad\twpres$ and thus $\twpres=\wpres+d(t)$ follows.
\end{proof}

The final lemma concerns estimates of the pressure term in \eqref{lin_linsystem}.
The following lemma was originally
proved for a two-dimensional exterior domain in \cite{GaldiTPFlowMovingCylinder}. We employ the ideas from \cite[Proof of Lemma 6]{GaldiTPFlowMovingCylinder} and establish the lemma in a slightly modified form for a three-dimensional exterior domain.

\begin{lem}\label{lin_estofpressureLem}
Let $\Omega$ and $\rey$ be as in Theorem \ref{lin_maxregboundeddomainThm} and $s\in(1,\infty)$. There is a constant $\Cc[lin_estofpressureLemConstPresEst]{C}=\Cclast{C}(\Rzero,\Omega,s)$ 
such that a solution 
$(\uvel,\upres)\in\WSRcomplper{1,2}{r}(\OmegaR)^3\times\DSRcomplperRzero{1}{r}(\OmegaR)$ to \eqref{lin_linsystem} corresponding
to data $F\in\LRcomplper{r}(\OmegaR)^3$ for some $r\in(1,\infty)$ satisfies for a.e. $t\in\R$:
\begin{align}
&\begin{aligned}
\norm{\upres(t,\cdot)}_{\frac{3}{2}s,\Omega_\Rzero} \leq \const{lin_estofpressureLemConstPresEst}\,\Bp{
\norm{F(t,\cdot)}_s+
\norm{\grad\uvel(t,\cdot)}_{s,\Omega_\Rzero} + 
\norm{\grad\uvel(t,\cdot)}_{s,\Omega_\Rzero}^{\frac{s-1}{s}}\,\norm{\grad\uvel(t,\cdot)}_{1,s,\Omega_\Rzero}^\frac{1}{s}
}
\end{aligned}\label{lin_estofpressureLem_est1}
\end{align}
Moreover, for every $\rho>\Rstar$ there is a constant $\Cc[lin_estofpressureLemConstGradPresEst]{C}=\Cclast{C}(\rho,\Omega,s)$ such that
for a.e. $t\in\R$:
\begin{align}
&\begin{aligned}
\norm{\grad\upres(t,\cdot)}_{s,\Omega^\rho} \leq \const{lin_estofpressureLemConstGradPresEst}\,\bp{\norm{F(t,\cdot)}_{s} + \norm{\upres(t,\cdot)}_{s,\Omega_\rho} }.
\end{aligned}\label{lin_estofpressureLem_est2}
\end{align}
 
% Moreover, 
% \begin{align}
% \begin{aligned}
% \norm{\upres(t,\cdot)}_{\frac{3}{2}s,\Omega_\Rzero} \leq \Ccn{C}\,\bp{\norm{F(t,\cdot)}_{\frac{3}{2}s} + \norm{\grad\uvel(t,\cdot)}_{s,\Omega_\Rzero}
% + \norm{\grad^2\uvel(t,\cdot)}_{s,\Omega_\Rzero}}
% \end{aligned}\label{lin_estofpressureLem_est2}
% \end{align}
% for almost every $t\in\R$.  
\end{lem}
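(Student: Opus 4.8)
The plan is to reduce both bounds to elliptic regularity for the pressure at a fixed time. Applying $\Div$ to \eqrefsub{lin_linsystem}{1} and using \eqrefsub{lin_linsystem}{2} — so that $\partial_t\uvel$, $\partial_1\uvel$ and $\Delta\uvel$ are all solenoidal — gives
\begin{equation*}
\Delta\upres=\Div F\quad\tin\OmegaR .
\end{equation*}
One then fixes $t\in\R$ with $F(t,\cdot)\in\LR{s}(\Omega)$ (otherwise the right-hand sides are $+\infty$ and there is nothing to prove); interior elliptic regularity upgrades $(\uvel,\upres)(t,\cdot)$ from the local $W^{2,r}\times D^{1,r}$ class to the local $W^{2,s}\times D^{1,s}$ class, so every norm in \eqref{lin_estofpressureLem_est1}--\eqref{lin_estofpressureLem_est2} is finite. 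From here on $t$ is fixed and omitted, and the momentum balance reads $\grad\upres=F+\nu\Delta\uvel-\rey\partial_1\uvel-\partial_t\uvel$ on $\Omega$.

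For \eqref{lin_estofpressureLem_est1} the idea is to use that $\Omega_\Rzero$ is a bounded Lipschitz domain on which $\int_{\Omega_\Rzero}\upres\,\dx=0$, so the Ne\v{c}as inequality reduces the task to estimating $\bigl|\int_{\Omega_\Rzero}\grad\upres\cdot\phi\,\dx\bigr|$ over $\phi\in\CRci(\Omega_\Rzero)^3$ with $\norm{\grad\phi}_{(\frac{3}{2}s)'}\leq 1$. Inserting the momentum balance, the term with $F$ is $\leq\norm{F}_{s,\Omega_\Rzero}\norm{\phi}_{s'}\leq c\,\norm{F}_s$ by the Sobolev inequality for $\phi$ (the exponents lie in the admissible range in $\R^3$); the term with $\partial_1\uvel$ is similarly $\leq c\,\norm{\grad\uvel}_{s,\Omega_\Rzero}$; and the viscous term, integrated by parts with no boundary contribution since $\phi$ has compact support, equals $-\int\grad\uvel:\grad\phi\leq\norm{\grad\uvel}_{\frac{3}{2}s,\Omega_\Rzero}$, which by the Gagliardo--Nirenberg inequality in $\R^3$ is $\leq c\,\norm{\grad\uvel}_{s,\Omega_\Rzero}^{(s-1)/s}\,\norm{\grad\uvel}_{1,s,\Omega_\Rzero}^{1/s}$ — this is exactly why the exponent $\tfrac{3}{2}s$ and the interpolation term arise. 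The remaining term $\int_{\Omega_\Rzero}\partial_t\uvel\cdot\phi$ is the crux; it is to be removed by exploiting that $\partial_t\uvel(t,\cdot)$ is solenoidal and vanishes on $\partial\Omega$, hence is $\LR{2}$-orthogonal to every gradient field on $\Omega$ and so does not contribute to $\grad\upres$ modulo solenoidal fields, combined with a suitable choice of the test field and a localization argument, as carried out in \cite[Proof of Lemma~6]{GaldiTPFlowMovingCylinder}.

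For \eqref{lin_estofpressureLem_est2} I would fix $\Rstar<\rho_1<\rho_2<\rho$ and a cutoff $\cutoff\in\CRi(\R^3)$ with $\cutoff\equiv1$ on $\{\snorm{x}\geq\rho_2\}$ and $\cutoff\equiv0$ on $\{\snorm{x}\leq\rho_1\}$. Then $\cutoff\upres$, extended by $0$ into $\B_{\rho_1}$, is defined on $\R^3$ and solves
\begin{equation*}
\Delta(\cutoff\upres)=\Div(\cutoff F)+g_0,\qquad g_0:=2\grad\cutoff\cdot\grad\upres+(\Delta\cutoff)\upres-\grad\cutoff\cdot F ,
\end{equation*}
with $g_0$ supported in the shell $\{\rho_1<\snorm{x}<\rho_2\}$, which lies compactly in $\Omega_\rho$; interior elliptic estimates for $\Delta\upres=\Div F$ on a slightly larger shell give $\norm{g_0}_s\leq c\bp{\norm{F}_s+\norm{\upres}_{s,\Omega_\rho}}$. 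Because $\Div\uvel=0$ — and using $\partial_1\uvel\cdot n=0$ on $\partial\Omega$ (a consequence of $\uvel=0$ and $\Div\uvel=0$ there) together with $\int_{\partial\Omega}\Delta\uvel\cdot n\,\dS=-\int_{\partial\Omega}(\curl\curl\uvel)\cdot n\,\dS=0$ by Stokes' theorem on the closed surface $\partial\Omega$ — one gets $\int_{\R^3}g_0\,\dx=0$. Hence the Newtonian potential $\grad\Delta^{-1}g_0$ decays like $\snorm{x}^{-3}$ at infinity, so it lies in $\LR{s}(\R^3)$ for every $s\in(1,\infty)$ with $\norm{\grad\Delta^{-1}g_0}_{s,\R^3}\leq c\,\norm{g_0}_s$, while $\grad\Delta^{-1}\Div(\cutoff F)$ is a composition of Riesz transforms applied to $\cutoff F$, hence $\norm{\grad\Delta^{-1}\Div(\cutoff F)}_{s,\R^3}\leq c\,\norm{F}_s$. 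Since $\grad(\cutoff\upres)\in\LR{r}(\R^3)$, a Liouville argument identifies $\cutoff\upres$ with $\Delta^{-1}\Div(\cutoff F)+\Delta^{-1}g_0$ up to an additive constant; restricting to $\{\snorm{x}>\rho\}$, where $\cutoff\equiv1$, yields \eqref{lin_estofpressureLem_est2}.

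The hard part will be the disposal of the $\partial_t\uvel$ term in \eqref{lin_estofpressureLem_est1}: the estimate must hold pointwise in $t$ with a right-hand side involving $F$ and $\grad\uvel$ only at the same instant and only over $\Omega_\Rzero$, so $\partial_t\uvel(t,\cdot)$ cannot be controlled quantitatively and has to be eliminated through its structure (solenoidality and vanishing on $\partial\Omega$) in a way compatible with the required localization. The bound \eqref{lin_estofpressureLem_est2} should be comparatively routine, the one delicate point there being the verification that $g_0$ has vanishing mean, which is what secures the $\snorm{x}^{-3}$ decay that places $\grad\Delta^{-1}g_0$ in $\LR{s}$ for all $s>1$.
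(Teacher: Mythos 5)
Your plan for \eqref{lin_estofpressureLem_est1} has a genuine gap, and it is exactly at the point you flag as the ``crux''. The orthogonality you want to exploit — that $\partial_t\uvel(t,\cdot)$ is solenoidal, vanishes on $\partial\Omega$, and is therefore orthogonal to gradient fields — is valid only when the test object is a gradient $\grad\phi$ on all of $\Omega$. The Ne\v{c}as inequality on $\Omega_\Rzero$ tests $\grad\upres$ against arbitrary vector fields $\phi\in\CRci(\Omega_\Rzero)^3$, which are neither gradients nor defined on all of $\Omega$, so the orthogonality never engages and the term $\int_{\Omega_\Rzero}\partial_t\uvel\cdot\phi\,\dx$ remains. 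That term cannot be dominated by the right-hand side of \eqref{lin_estofpressureLem_est1}, which involves only $F(t,\cdot)$ and $\grad\uvel(t,\cdot)$ over $\Omega_\Rzero$ at the same instant. Appealing to ``a localization argument as in \cite[Proof of Lemma~6]{GaldiTPFlowMovingCylinder}'' does not close this: the cited argument does not run through a local Ne\v{c}as inequality at all.

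The paper's proof sidesteps the issue by never localizing the momentum balance. Testing \eqrefsub{lin_linsystem}{1} against $\grad\phi$ with $\phi\in\CRci(\overline\Omega)$ makes \emph{both} $\partial_t\uvel$ and $\rey\partial_1\uvel$ drop out, the first via time-periodicity and $\Div\uvel=0$, the second via $\Div\uvel=0$ together with $\uvel=0$ on $\partial\Omega$. The pressure is thus a solution of the global weak Neumann problem with right-hand side $F+\Delta\uvel$, and one writes $\upres=\upres_1+\upres_2$ according to this splitting. The part $\upres_1$ obeys the standard a~priori estimate $\norm{\grad\upres_1}_s\leq c\norm{F}_s$. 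The local norm $\norm{\upres_2}_{\frac{3}{2}s,\Omega_\Rzero}$ is then estimated by duality: given $g\in\CRci(\Omega_\Rzero)$ with zero mean, one constructs $h$ with $\Div h=g$ and a potential $\Phi$ solving the weak Neumann problem with right-hand side $h$; the asymptotic expansion of $\Phi$ together with $\int_{\partial\B_\rho}\pdn{\Phi}\,\dS=0$ gives $\grad\Phi=\bigo(\snorm{x}^{-3})$, which allows one to integrate $\int_\Omega\Delta\uvel\cdot\grad\Phi\,\dx$ down to a boundary integral over $\partial\Omega$; trace inequalities then produce exactly the interpolation term in \eqref{lin_estofpressureLem_est1}. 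The essential point is that the duality test object is the \emph{gradient} $\grad\Phi$, so the disappearance of $\partial_t\uvel$ and $\partial_1\uvel$ is automatic; in the Ne\v{c}as framework there is no natural way to arrange that. So for the first estimate your proposal is not a different proof of the same result, but an incomplete one.

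For \eqref{lin_estofpressureLem_est2} your plan is genuinely different and plausible: extend $\cutoff\upres$ by zero to $\R^3$, write $\Delta(\cutoff\upres)=\Div(\cutoff F)+g_0$, invoke the $\snorm{x}^{-3}$ decay of the Newtonian potential of $g_0$ once $\int_{\R^3}g_0\,\dx=0$ is verified, and conclude via Riesz transforms and a Liouville argument. This works, but it is more laborious than the paper's route, which simply observes that $\cutoff\upres$ solves a weak Neumann problem on $\Omega$ with two functionals $\calf_1,\calf_2$ estimated in the dual Sobolev norm and then quotes the a~priori estimate; the paper's version needs no zero-mean verification, no decay analysis and no Liouville step.
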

\begin{proof}
For the sake of simplicity, the $t$-dependency of functions is not indicated. All norms are taken with respect to the spatial variables only.
Consider an arbitrary $\phi\in\CRci\np{\overline{\Omega}}$.
Observe that for any $\psi\in\CRiper(\R)$ holds
\begin{align*}
\int_{0}^\per\int_{\Omega}\partial_t\uvel\cdot\grad\phi\,\dx\, \psi\,\dt = 
\int_{0}^\per\int_{\Omega}\Div\uvel\, \phi\, \partial_t\psi\,\dx\dt =0,
\end{align*} 
which implies $\int_{\Omega}\partial_t\uvel\cdot\grad\phi\,\dx=0$ for a.e. $t$. Moreover, 
\begin{align*}
\int_\Omega \partial_1\uvel\cdot\grad\phi\,\dx = -\int_\Omega \Div\uvel \cdot\partial_1\phi\,\dx =  0. 
\end{align*}
Hence it follows from \eqref{lin_linsystem} that $\upres$ is a solution to the weak Neumann problem for the Laplacian:
\begin{align*}
\forall\phi\in\CRci\np{\overline{\Omega}}:\ \int_\Omega\grad\upres\cdot\grad\phi\,\dx = \int_\Omega F\cdot \grad\phi + \Delta\uvel\cdot\grad\phi\,\dx.
\end{align*}
Recall that
\begin{align}\label{lin_estofpressureLem_NeumannproblemClass}
\upres\in\LRloc{1}(\Omega)\quad\wedge\quad\grad\upres\in\LR{r}(\Omega)^3\quad\wedge\quad\int_{\Omega_{\Rzero}} \upres\,\dx=0.
\end{align}
It is well-known that the weak Neumann problem for the Laplacian in an exterior domain is uniquely solvable in the class \eqref{lin_estofpressureLem_NeumannproblemClass}; 
see for example \cite[Section III.1]{GaldiBookNew} or \cite{SimaderWeakDirichletNeumann1990}.
We can thus write $\upres$ as a sum $\upres = \upres_1+\upres_2$ of two solutions (in the class above) to the weak Neumann problem
\begin{align*}
\forall\phi\in\CRci\np{\overline{\Omega}}:\ \int_\Omega\grad\upres_1\cdot\grad\phi\,\dx = \int_\Omega F\cdot \grad\phi \,\dx
\end{align*}
and 
\begin{align*}
\forall\phi\in\CRci\np{\overline{\Omega}}:\ \int_\Omega\grad\upres_2\cdot\grad\phi\,\dx = \int_\Omega  \Delta\uvel\cdot\grad\phi\,\dx,
\end{align*}
respectively. The \textit{a priori} estimate 
\begin{align}\label{lin_estofpressureLem_NeumannproblemAprioriEst1}
&\forall q\in(1,\infty):\ \norm{\grad\upres_1}_q\leq \Cc{c}\,\norm{F}_q
\end{align}
is well-known.
An estimate of $\upres_2$ shall now be established. 
Consider for this purpose an arbitrary function
$g\in\CRci(\Omega_\Rzero)$ with $\int_{\Omega_\Rzero} g\,\dx = 0$. Existence of a vector field $h\in\CRci(\Omega_\Rzero)$ with $\Div h = g$ and 
\begin{align*}
\forall q\in(1,\infty):\ \norm{h}_{1,q} \leq \Cc{c}\,\norm{g}_q
\end{align*}
is well-known; see for example \cite[Theorem III.3.3]{GaldiBookNew}. Let $\Phi$ be a solution to the following weak Neumann problem for the Laplacian:
\begin{align*}
\forall\phi\in\CRci\np{\overline{\Omega}}:\ \int_\Omega\grad\Phi\cdot\grad\phi\,\dx = \int_\Omega  h\cdot\grad\phi\,\dx.
\end{align*} 
By classical theory, such a solution exists with
\begin{align*}
\forall q\in(1,\infty):\ \Phi\in\CRi(\overline{\Omega})\ \wedge\ \norm{\grad\Phi}_{1,q} \leq \Cc{c}\,\norm{h}_{1,q} \leq \Cc{c}\,\norm{g}_{q}.
\end{align*}
Since $\Phi$ is harmonic in $\R^3\setminus\overline{\B_\Rzero}$, the following asymptotic 
expansion as $\snorm{x}\ra\infty$ is valid: 
\begin{align*}
\partial^\alpha \Phi(x) = \partial^\alpha \Cc{c} + \partial^\alpha\fundsollaplace(x)\cdot \int_{\partial\B_{\rho}} \pdn{\Phi}\,\dS + \bigo\bp{\snorm{x}^{-2-\snorm{\alpha}}},
\end{align*}
where $\Cclast{c}$ is a constant and $\fundsollaplace:\R^3\setminus\set{0}\ra\R,\ \fundsollaplace(x):=(4\pi \snorm{x})^{-1}$ the fundamental solution of the Laplacian in $\R^3$.
Observing that 
\begin{align*}
\int_{\partial\B_{\rho}} \pdn{\Phi}\,\dS = \int_{\partial\Omega} \pdn{\Phi}\,\dS + \int_{\Omega_\rho} \Delta\Phi\,\dx = 
0 + \int_{\Omega_\rho} \Div h \,\dx =
\int_{\Omega_\rho} g\,\dx = 0,  
\end{align*}
we thus deduce $\grad\Phi(x) = \bigo\bp{\snorm{x}^{-3}}$  as $\snorm{x}\ra\infty$.
Similarly, we see that $\upres_2 = \Cc{c}+\bigo\bp{\snorm{x}^{-1}}$. We therefore conclude that
\begin{align*}
\lim_{\rho\ra\infty}\int_{\partial\B_\rho} \upres_2\, \pdn{\Phi}\,\dx = 0.
\end{align*}
We can thus compute
\begin{align}\label{lin_estofpressureLem_Comppres2}
\begin{aligned}
\int_\Omega \upres_2\, g\,\dx &= \int_\Omega \upres_2\, \Delta\Phi\,\dx= \lim_{\rho\ra\infty}\int_{\Omega_\rho} \upres_2\, \Delta\Phi\,\dx\\
&= \lim_{\rho\ra\infty} \Bp{\int_{\partial\Omega_\rho}\upres_2\, \pdn{\Phi}\,\dS - \int_{\Omega_\rho}\grad\upres_2\cdot\grad\Phi\,\dx}\\
&= - \int_{\Omega}\grad\upres_2\cdot\grad\Phi\,\dx 
= - \int_{\Omega}\Delta\uvel\cdot\grad\Phi\,\dx. 
\end{aligned}
\end{align}
The decay of $\grad\Phi$ further implies $\partial_i\uvel_j\,\partial_k\Phi\in\LR{1}(\B^{2\Rzero})$. We can thus find 
a sequence $\seqkN{\rho}$ of positive numbers with $\lim_{k\ra\infty}\rho_k=\infty$ such that 
\begin{align*}
\lim_{\rho_k\ra\infty}\int_{\partial\B_\rho} \partial_j\uvel_i\,\partial_i\Phi\, n_j - \partial_j\uvel_i\,\partial_j\Phi\, n_i \,\dS = 0.
\end{align*}
Returning to \eqref{lin_estofpressureLem_Comppres2}, we continue the computation and find that
\begin{align*}
\int_\Omega \upres_2\, g\,\dx &= - \lim_{k\ra\infty} \int_{\Omega_{\rho_k}}\Delta\uvel\cdot\grad\Phi\,\dx\\
&= - \lim_{k\ra\infty} {\int_{\partial\Omega_{\rho_k}}\partial_j\uvel_i\,\partial_i\Phi\, n_j - \partial_j\uvel_i\,\partial_j\Phi\, n_i \dS} \\
&= - {\int_{\partial\Omega}\partial_j\uvel_i\,\partial_i\Phi\, n_j - \partial_j\uvel_i\,\partial_j\Phi\, n_i \dS}
= - {\int_{\partial\Omega}\grad\uvel : \bp{\grad\Phi\otimes n - n\otimes\grad\Phi} \dS}.
\end{align*}
Applying first the H\"older and then a trace inequality (for example \cite[Theorem II.4.1]{GaldiBookNew}), we deduce 
\begin{align*}
\snormL{\int_\Omega \upres_2\, g\,\dx} &\leq \Cc{c}\,\norm{\grad\uvel}_{s,\partial\Omega}\,\norm{\grad\Phi}_{\frac{s}{s-1},\partial\Omega}\\
&\leq \Cc{c}\,\norm{\grad\uvel}_{s,\partial\Omega}\,\norm{\grad\Phi}_{1,\frac{3s}{3s-2},\Omega_\Rzero} 
\leq \Cc{c}\,\norm{\grad\uvel}_{s,\partial\Omega}\,\norm{g}_{\frac{3s}{3s-2},\Omega_\Rzero}.
\end{align*}
Recalling that $\int_{\Omega_\Rzero}\upres_2\,\dx=0$, we thus obtain
\begin{align*}
\norm{\upres_2}_{\frac{3}{2}s,\Omega_\Rzero}
 = \sup_{\overset{g\in\CRci(\Omega_\Rzero), \norm{g}_\frac{3s}{3s-2}=1}{\int_{\Omega_\Rzero}g\,\dx=0}}
\snormL{\int_\Omega \upres_2\, g\,\dx} \leq \Cc{c}\,\norm{\grad\uvel}_{s,\partial\Omega}.
\end{align*}
Another application of a trace inequality (see again \cite[Theorem II.4.1]{GaldiBookNew}) now yields
\begin{align*}
\norm{\upres_2}_{\frac{3}{2}s,\Omega_\Rzero} &\leq \Cc{c}\,\Bp{
\norm{\grad\uvel}_{s,\Omega_\Rzero} + 
\norm{\grad\uvel}_{s,\Omega_\Rzero}^{\frac{s-1}{s}}\,\norm{\grad\uvel}_{1,s,\Omega_\Rzero}^\frac{1}{s}
}.
\end{align*}
Recalling \eqref{lin_estofpressureLem_NeumannproblemAprioriEst1}, we employ Sobolev's embedding theorem to finally conclude
\begin{align*}
\norm{\upres}_{\frac{3}{2}s,\Omega_\Rzero} &\leq \Cc{c}\,\norm{\grad\upres_1}_{s,\Omega_\Rzero} +  \norm{\upres_2}_{\frac{3}{2}s,\Omega_\Rzero}\\
&\leq \Cc{c}\,\Bp{
\norm{F}_s+
\norm{\grad\uvel}_{s,\Omega_\Rzero} + 
\norm{\grad\uvel}_{s,\Omega_\Rzero}^{\frac{s-1}{s}}\,\norm{\grad\uvel}_{1,s,\Omega_\Rzero}^\frac{1}{s}
}
\end{align*}
and thus \eqref{lin_estofpressureLem_est1}. 
To show \eqref{lin_estofpressureLem_est2}, we introduce $\Rmiddel\in(\Rstar,\rho)$ and a ``cut-off'' function $\cutoff\in\CRi(\R^3;\R)$
with $\cutoff=1$ on $\Omega^\Rmiddel$ and $\cutoff=0$ on $\Omega_\Rstar$. We then put $\wpres:=\cutoff\upres$ and observe from \eqref{lin_linsystem}
that $\wpres$ is a solution to the weak Neumann problem for the Laplacian
\begin{align}\label{lin_estofpressureLem_weakNeumannAfterCutoff}
\forall\phi\in\CRci\np{\overline{\Omega}}:\ \int_\Omega \grad\wpres\cdot\grad\phi\,\dx = \linf{\calf_1}{\phi} + \linf{\calf_2}{\phi}
\end{align}
with
\begin{align*}
\linf{\calf_1}{\phi} := \int_{\Omega} \bp{2\upres\,\grad\cutoff + \cutoff F}\cdot\grad\phi\,\dx,\quad 
\linf{\calf_2}{\phi} := \int_{\Omega} \bp{\grad\cutoff \cdot F+\Delta\cutoff\,\upres }\,\phi\,\dx. 
\end{align*}
We clearly have 
\begin{align*}
\sup_{\norm{\grad\phi}_{s^*}=1}\snorm{\linf{\calf_1}{\phi}} \leq \Cc{c}\,\bp{\norm{\upres}_{s,\Omega_\rho} + \norm{\calf}_s}.
\end{align*}
Since $\cutoff=1$ on $\Omega^\Rmiddel$, we further observe that 
\begin{align*}
\sup_{\norm{\grad\phi}_{s^*}=1}\snorm{\linf{\calf_2}{\phi}} = \sup_{\overset{\norm{\grad\phi}_{s^*}=1}{\supp\phi\subset{\Omega_\rho}}}\snorm{\linf{\calf_2}{\phi}}
\leq \Cc{c}\,\bp{\norm{F}_{s} + \norm{\upres}_{s,\Omega_\rho}},
\end{align*}
where Poincar\'{e}'s inequality is used to obtain the last estimate. A standard \textit{a priori} estimate for the weak Neumann problem \eqref{lin_estofpressureLem_weakNeumannAfterCutoff} now implies \eqref{lin_estofpressureLem_est2}.
\end{proof}

\begin{proof}[Proof of Theorem \ref{lin_maxregcomplspacesThm}]
By density of $\CRcicomplper(\OmegaR)$ in $\LRcomplper{q}\np{\OmegaR}$, it 
suffices to consider only $\F\in\CRcicomplper(\OmegaR)^3$. The starting point will be the solution 
$(\uvel,\upres)\in{\WSRcomplper{1,2}{2}(\OmegaR)^3\times\DSRcomplper{1}{2}}(\OmegaR)$
from Lemma \ref{lin_maxregcomplspacesq2Lem}. By adding to $\upres$ a function that only depends on time, we may assume without loss of generality that $\int_{\Omega_{\Rzero}} \upres\,\dx=0$.
For the scope of the proof, we fix a constant $\rhoover$ with $\Rstar<\rhounder<\Rzero$.

We shall establish two fundamental estimates. To show the first one, we introduce a cut-off function 
$\cutoffunder\in\CRi(\R^3;\R)$ with $\cutoffunder(x)=1$ for $\snorm{x}\geq\rhounder$ and $\cutoffunder(x)=0$ for $\snorm{x}\leq\Rstar$. 
We let $\fundsollaplace:\R^3\setminus\set{0}\ra\R,\ \fundsollaplace(x):=(4\pi \snorm{x})^{-1}$ denote the fundamental solution to the Laplace operator and put
\begin{align}\label{lin_maxregcomplspacesThm_defofw}
\begin{aligned}
&\Vvel:\R\times\R^3\ra\R^3,\quad \Vvel=\grad \fundsollaplace*_{\R^3}\bp{\grad\cutoffunder\cdot\uvel} , \\
&\Vpres:\R\times\R^3\ra\R,\quad \Vpres= \fundsollaplace*_{\R^3}\bp{\nb{\partial_t-\Delta+\rey\partial_1}\np{\grad\cutoffunder\cdot\uvel}} , \\
&\wvel:\R\times\R^3\ra\R^3,\quad \wvel(t,x):=\cutoffunder(x)\,\uvel(t,x)- \Vvel(t,x), \\
&\wpres:\R\times\R^3\ra\R,\quad \wpres(t,x):=\cutoffunder(x)\,\upres(t,x)- \Vpres(t,x). 
\end{aligned}
\end{align}
Then $(\wvel,\wpres)$ is a solution to the whole-space problem
\begin{align}\label{lin_linsystemwholespace}
\begin{pdeq}
&\partial_t\wvel - \Delta\wvel + \rey\partial_1\wvel  + \grad\wpres = && \\ 
&\qquad\qquad\qquad
\cutoffunder F - 2\grad\cutoffunder\cdot\grad\uvel-\Delta\cutoffunder \uvel + \rey\partial_1\cutoffunder\uvel + \grad\cutoffunder\upres&& \tin\RthreeR,\\
&\Div\wvel =0 && \tin\RthreeR.
\end{pdeq}
\end{align} 
The precise regularity of $(\wvel,\wpres)$ is not important at this point. It is enough to observe that $\wvel$ and $\wpres$ belong to the space of tempered time-periodic distributions $\TDRper(\RthreeR)$, which is easy to verify from the definition \eqref{lin_maxregcomplspacesThm_defofw} and the regularity of 
$\uvel$ and $\upres$. It is not difficult to show, see \cite[Lemma 5.3]{KyedExRegTPNS2014}, that a solution $\wvel$ to \eqref{lin_linsystemwholespace}
is unique in the class of distributions in $\TDRper(\RthreeR)$ satisfying $\proj\wvel=0$. Consequently, $\wvel$ coincides with 
the solution from \cite[Theorem 2.1]{KyedMaxregTPNS2014} and therefore satisfies 
\begin{align*}
\norm{\wvel}_{1,2,s} 
&\leq \Cc{c}\,\norm{\cutoffunder F - 2\grad\cutoffunder\cdot\grad\uvel-\Delta\cutoffunder \uvel + \rey\partial_1\cutoffunder\uvel + \grad\cutoffunder\upres}_s\\
&\leq \Cc{c}\,\bp{\norm{F}_s + \norm{\uvel}_{s,\torus\times\Omega_\rhounder}+\norm{\grad\uvel}_{s,\torus\times\Omega_\rhounder}+\norm{\upres}_{s,\torus\times\Omega_\rhounder}}
\end{align*}
for all $s\in(1,\infty)$. Clearly, 
\begin{align*}
\norm{\grad\Vvel}_s+\norm{\grad^2\Vvel}_s\leq \Cc{c}\bp{\norm{\uvel}_{s,\Omega_\rhounder}+\norm{\grad\uvel}_{s,\Omega_\rhounder}}.
\end{align*}
Since $\uvel=\wvel+\Vvel$ for  $x\in\Omega^\rhounder$, we conclude
\begin{align*}
&\norm{\grad\uvel}_{s,\torus\times\Omega^\rhounder} + \norm{\grad^2\uvel}_{s,\torus\times\Omega^\rhounder} \\
&\qquad\qquad \leq \Cc{c}\,\bp{\norm{F}_s + 
\norm{\uvel}_{s,\torus\times\Omega_\rhounder}+\norm{\grad\uvel}_{s,\torus\times\Omega_\rhounder}+\norm{\upres}_{s,\torus\times\Omega_\rhounder}}
\end{align*}
for all $s\in(1,\infty)$.
For a similar estimate of $\uvel$ and $\partial_t\uvel$, we turn first to \eqref{lin_linsystem} and then apply \eqref{lin_estofpressureLem_est2} to deduce 
\begin{align*}
\norm{\partial_t\uvel}_{s,\torus\times\Omega^\rhounder} 
&\leq \Cc{c}\,\bp{ \norm{F}_s + \norm{\Delta\uvel}_{s,\torus\times\Omega^\rhounder} + \norm{\rey\partial_1\uvel}_{s,\torus\times\Omega^\rhounder}+\norm{\grad\upres}_{s,\torus\times\Omega^\rhounder}}\\
&\leq \Cc{c}\,\bp{\norm{F}_s + 
\norm{\uvel}_{s,\torus\times\Omega_\rhounder}+\norm{\grad\uvel}_{s,\torus\times\Omega_\rhounder}+\norm{\upres}_{s,\torus\times\Omega_\rhounder}}.
\end{align*}
Since $\proj\uvel=0$, Poincar\'{e}'s inequality yields $\norm{\uvel}_{s,\torus\times\Omega^\rhounder}\leq\Cc{c}\norm{\partial_t\uvel}_{s,\torus\times\Omega^\rhounder}$.
We have thus shown
\begin{align}\label{lin_maxregcomplspacesThm_aprioriEstover}
&\norm{\uvel}_{1,2,s,\torus\times\Omega^\rhounder} \leq \Cc{c}\,\bp{\norm{F}_s + 
\norm{\uvel}_{s,\torus\times\Omega_\rhounder}+\norm{\grad\uvel}_{s,\torus\times\Omega_\rhounder}+\norm{\upres}_{s,\torus\times\Omega_\rhounder}}
\end{align}
for all $s\in(1,\infty)$.

Next, we seek to establish a similar estimate for $\uvel$ over the bounded domain $\torus\times\Omega_\rhounder$.
For this purpose, we introduce a ``cut-off'' function $\cutoffover\in\CRi(\R^3;\R)$ with $\cutoffover(x)=1$ for $\snorm{x}\leq\rhoover$ and 
$\cutoffover(x)=0$ for $\snorm{x}\geq\Rzero$. We then introduce a vector field $\Vvel$ with
\begin{align}\label{lin_maxregcomplspacesThm_bogliftingvecfield}
\begin{aligned}
&\Vvel\in\WSRcomplper{1,2}{2}\np{\R\times\R^3},\quad \supp\Vvel\subset\R\times\Omega_{\rhoover,\Rzero},\quad 
\Div\Vvel = \grad\cutoffover\cdot\uvel,\\
&\forall s\in(1,\infty):\ \norm{\Vvel}_{1,2,s}\leq\Cc{c}\,
\bp{\norm{\uvel}_{s,\torus\times\Omega_{\rhoover,\Rzero}}+\norm{\grad\uvel}_{s,\torus\times\Omega_{\rhoover,\Rzero}} + \norm{\partial_t\uvel}_{s,\torus\times\Omega_{\rhoover,\Rzero}}}.
\end{aligned}
\end{align}
Since 
\begin{align*}
\int_{\Omega_{\rhoover,\Rzero}}\grad\cutoffover\cdot\uvel\,\dx = \int_{\Omega_{\Rzero}}\Div\bp{\cutoffover\uvel}\,\dx =
\int_{\partial\Omega_{\Rzero}} \uvel\cdot n\,\dS = 0, 
\end{align*}
the existence of a vector field $V$ with the properties above can be established by the same construction as the one used in 
\cite[Theorem III.3.3]{GaldiBookNew}; see also \cite[Proof of Lemma 3.2.1]{habil}. We now let
\begin{align}\label{lin_maxregcomplspacesThm_defofw2}
\begin{aligned}
&\wvel:\R\times\R^3\ra\R^3,\quad \wvel(t,x):=\cutoffover(x)\,\uvel(t,x)- \Vvel(t,x), \\
&\wpres:\R\times\R^3\ra\R,\quad \wpres(t,x):=\cutoffover(x)\,\upres(t,x). 
\end{aligned}
\end{align}
Then $(\wvel,\wpres)\in\WSRcomplper{1,2}{2}(\R\times\Omega_\Rzero)\times\DSRcomplper{1}{2}(\R\times\Omega_\Rzero)$ is a solution to the problem
\begin{align*}%\label{lin_linsystemwholespace2}
\begin{pdeq}
&\partial_t\wvel - \Delta\wvel + \rey\partial_1\wvel  + \grad\wpres  && \\ 
&\quad= \cutoffover F - 2\grad\cutoffover\cdot\grad\uvel-\Delta\cutoffover \uvel + \rey\partial_1\cutoffover\uvel + \grad\cutoffover\upres
+\nb{\partial_t-\Delta+\rey\partial_1}\Vvel 
&& \tin\R\times\Omega_\Rzero,\\
&\Div\wvel =0 && \tin\R\times\Omega_\Rzero,\\
&\wvel = 0 && \ton\R\times\partial\Omega_\Rzero.
\end{pdeq}
\end{align*} 
By Lemma \ref{lin_maxregboundeddomainThm}, we thus deduce  
\begin{align*}
\norm{\wvel}_{1,2,s} 
&\leq \Cc{c}\,\norm{\cutoffover F - 2\grad\cutoffover\cdot\grad\uvel-\Delta\cutoffover \uvel + \rey\partial_1\cutoffover\uvel + \grad\cutoffover\upres+\nb{\partial_t-\Delta+\rey\partial_1}\Vvel}_s\\
&\leq \Cc{c}\,\bp{\norm{F}_s + \norm{\uvel}_{s,\torus\times\Omega_{\rhoover,\Rzero}}+\norm{\grad\uvel}_{s,\torus\times\Omega_{\rhoover,\Rzero}}+\norm{\upres}_{s,\torus\times\Omega_{\rhoover,\Rzero}}+\norm{\Vvel}_{1,2,s}}
\end{align*}
for all $s\in(1,\infty)$.
Since $\Omega_{\rhoover,\Rzero}\subset\Omega^\rhounder$, we can combine \eqref{lin_maxregcomplspacesThm_bogliftingvecfield} and \eqref{lin_maxregcomplspacesThm_aprioriEstover} to estimate
\begin{align*}
\norm{\uvel}_{1,2,s,\Omega_{\rhoover}}
\leq\Cc{c}\,\bp{\norm{F}_s + 
\norm{\uvel}_{s,\torus\times\Omega_\rhounder}+\norm{\grad\uvel}_{s,\torus\times\Omega_\rhounder}+\norm{\upres}_{s,\torus\times\Omega_\rhounder}},
\end{align*}
which was the intermediate goal at this stage. Combining the estimate above with \eqref{lin_maxregcomplspacesThm_aprioriEstover}, we have
\begin{align}\label{lin_maxregcomplspacesThm_aprioriEstfinal}
\norm{\uvel}_{1,2,s}
\leq\Cc{c}\,\bp{\norm{F}_s + 
\norm{\uvel}_{s,\torus\times\Omega_\Rzero}+\norm{\grad\uvel}_{s,\torus\times\Omega_\Rzero}+\norm{\upres}_{s,\torus\times\Omega_\Rzero}}
\end{align} 
for all $s\in(1,\infty)$.

We now move on to the final part of the proof. We emphasize that estimate \eqref{lin_maxregcomplspacesThm_aprioriEstfinal}
has been established for all $s\in(1,\infty)$, but we do not actually know whether the right-hand side is finite or not. At the outset,
we only know the right-hand side is finite for $s\leq 2$. We shall now use a boot-strap argument to show that is also the case for $s\in(2,\infty)$.     
For this purpose, we need the embedding properties of $\WSRper{1,2}{s}(\OmegaR)$ stated in Theorem \ref{SobEmbeddingThm}.
Choosing for example $\alpha=\beta=\frac{1}{2}$ in Theorem \ref{SobEmbeddingThm}, we obtain the implication
\begin{align}\label{lin_maxregcomplspacesThm_bsregvel}
\forall s\in[2,\infty):\quad \uvel\in\WSRper{1,2}{s}(\OmegaR)\ \Ra\ \uvel,\grad\uvel\in\LRper{\frac{3}{2}s}(\OmegaR).
\end{align}
We now turn to estimate \eqref{lin_estofpressureLem_est1} of the pressure term. By H\"older's inequality,  
\begin{align*}
&\int_0^\per \Bp{\norm{\grad\uvel(t,\cdot)}_{s,\Omega_\Rzero}^{\frac{s-1}{s}}\,\norm{\grad\uvel(t,\cdot)}_{1,s,\Omega_\Rzero}^\frac{1}{s}}^{\frac{3}{2}s}\,\dt \leq \Bp{\int_0^\per  \norm{\grad\uvel(t,\cdot)}_{s,\Omega_\Rzero}^{\frac{ (s-1)s}{\frac{2}{3}s-1}}\,\dt}^{\frac{s-\frac{3}{2}}{s}}
\norm{\uvel}_{1,2,s}^{\frac{3}{2}}.
\end{align*}
Utilizing again Theorem \ref{SobEmbeddingThm}, this time with $\beta=1$, we see that the right-hand side above is finite for all $s\in[2,\infty)$, provided $\uvel\in\WSRper{1,2}{s}(\OmegaR)$. 
Due to the normalization of the pressure $\upres$ carried out in the beginning of the proof, Lemma \ref{lin_estofpressureLem} can be applied to infer from \eqref{lin_estofpressureLem_est1} that
\begin{align}\label{lin_maxregcomplspacesThm_bsregpres}
\forall s\in[2,\infty):\quad \uvel\in\WSRper{1,2}{s}(\OmegaR)\ \Ra\ \upres\in\LRper{\frac{3}{2}s}(\R\times\Omega_\Rzero).
\end{align}
Combining \eqref{lin_maxregcomplspacesThm_aprioriEstfinal} with the implications \eqref{lin_maxregcomplspacesThm_bsregvel} and
\eqref{lin_maxregcomplspacesThm_bsregpres}, we find that
\begin{align}\label{lin_maxregcomplspacesThm_bsfinal}
\forall s\in[2,\infty):\quad \uvel\in\WSRper{1,2}{s}(\OmegaR)\ \Ra\ \uvel\in\WSRper{1,2}{\frac{3}{2}s}(\OmegaR).
\end{align}
Starting with $s=2$, we can now boot-strap \eqref{lin_maxregcomplspacesThm_bsfinal} a sufficient number of times to deduce that 
$\uvel\in\WSRper{1,2}{s}(\OmegaR)$ for any $s\in(2,\infty)$. Knowing now that the right-hand side of
\eqref{lin_maxregcomplspacesThm_aprioriEstfinal} is finite for all $s\in(1,\infty)$, we can use interpolation and
\eqref{lin_estofpressureLem_est1}
in combination with Young's inequality to deduce
\begin{align*}
\norm{\uvel}_{1,2,s}
\leq\Cc{c}\,\bp{\norm{F}_s + 
\norm{\uvel}_{s,\torus\times\Omega_\Rzero}}
\end{align*} 
for all $s\in(1,\infty)$. It then follows directly from \eqref{lin_linsystem} that 
\begin{align}\label{lin_maxregcomplspacesThm_apriorionlylowestorder}
\norm{\uvel}_{1,2,s} + \norm{\grad\upres}_s
\leq\Cclast{c}\,\bp{\norm{F}_s + 
\norm{\uvel}_{s,\torus\times\Omega_\Rzero}}
\end{align} 
for all $s\in(1,\infty)$. 

If $(\Uvel,\Upres)\in\WSRcomplper{1,2}{r}(\OmegaR)\times\DSRcomplper{1}{r}(\R\times\Omega)$ with $r\in(1,\infty)$ is a solution to \eqref{lin_linsystem} with homogeneous right-hand side, then $\Uvel=\grad\Upres=0$ follows by a duality argument. 
More specifically, since  
for arbitrary $\testphi\in\CRcicomplper(\OmegaR)^3$ 
existence of a solution $(\Wvel,\Wpres)\in\WSRcomplper{1,2}{r'}(\OmegaR)\times\DSRcomplper{1}{r'}(\R\times\Omega)$
to \eqref{lin_linsystem} with $\testphi$ as the right-hand side has just been established, the computation
\begin{align}\label{lin_maxregcomplspacesThm_dualitycomputation}
\begin{aligned}
0 
&= \int_0^\per\int_\Omega (\partial_t\Uvel - \Delta\Uvel + \rey\partial_1\Uvel  + \grad\Upres)\cdot\Wvel\,\dx\dt\\
&= -\int_0^\per\int_\Omega \Uvel\cdot(\partial_t\Wvel  - \Delta\Wvel + \rey\partial_1\Wvel + \grad\Wpres)\,\dx\dt = \int_0^\per\int_\Omega \Uvel\cdot \testphi\,\dx\dt
\end{aligned}
\end{align}
is valid.
It follows that $\Uvel=0$ and in turn, directly from \eqref{lin_linsystem}, that also $\grad\Upres=0$.

We now return to the estimate \eqref{lin_maxregcomplspacesThm_apriorionlylowestorder}. Owing to the fact a solution 
 to \eqref{lin_linsystem} with homogeneous right-hand is necessarily zero, which 
we have just shown above, 
a standard contradiction argument, see for example \cite[Proof of Proposition 2]{GaldiTPFlowMovingCylinder}, can be used to eliminate the lower order term on 
the right-hand side in \eqref{lin_maxregcomplspacesThm_apriorionlylowestorder} to conclude 
\begin{align}\label{lin_maxregcomplspacesThm_aprioriestfinalfinal}
\norm{\uvel}_{1,2,q}+\norm{\grad\upres}_{q}\leq\Cc{c}\,{\norm{F}_q }.
\end{align} 
It is easy to verify that $\CRcicomplper(\torus\times\Omega)$ is dense in $\LRcomplper{q}(\torus\times\Omega)$. 
By a density argument, the existence of a solution 
$(\uvel,\upres)\in\WSRcomplper{1,2}{q}(\OmegaR)^3\times\DSRcomplper{1}{q}(\torus\times\Omega)$ to \eqref{lin_linsystem} that satisfies
\eqref{lin_maxregcomplspacesThm_aprioriestfinalfinal} follows for any $F\in\LRcomplper{q}(\torus\times\Omega)^3$. 

Finally, assume $(\tuvel,\tupres)\in\WSRcomplper{1,2}{r}(\OmegaR)\times\DSRcomplper{1}{r}(\OmegaR)$ is another 
solution to \eqref{lin_linsystem} with $r\in(1,\infty)$. 
The duality argument
used in \eqref{lin_maxregcomplspacesThm_dualitycomputation} applied to the difference $(\uvel-\tuvel,\upres-\tupres)$ yields $\uvel=\tuvel$ and $\grad\upres=\grad\tupres$. The proof of theorem 
is thereby complete. 
\end{proof}

By combining Theorem \ref{lin_maxregcomplspacesThm} with well-known $\LR{q}$ estimates for the steady-state Oseen system, we can formulate what can be referred to as 
``maximal $\LR{q}$ regularity'' of the time-periodic Oseen system \eqref{lin_linsystem}.

\begin{cor}\label{ext_lqestThmFull}
Let $\Omega\subset\R^3$ be an exterior domain of class $\CR{2}$, $\rey\in(0,\rey_0]$ and $q\in(1,2)$. 
Define 
\begin{align*}
&\xoseensigmazero{q}(\Omega):=\setc{\vvel\in\xoseen{q}(\Omega)}{\Div\vvel = 0,\ \vvel=0\text{ on }\partial\Omega}.
\end{align*}
Moreover, let $r\in(1,\infty)$ and 
\begin{align*}
\WSRpercomplsigmazero{1,2}{q,r}(\OmegaR):=\setcl{\wvel\in\WSRcomplper{1,2}{q,r}(\OmegaR)}{\Div\wvel=0,\ \wvel=0 \text{ on }\partial\Omega}.
\end{align*}
Then the $\per$-time-periodic Oseen operator
\begin{align}\label{ext_lqestThmFull_OseenOprDef}
\begin{aligned}
&\ALOseen:\Bp{\xoseensigmazero{q}(\Omega)\oplus\WSRpercomplsigmazero{1,2}{q,r}(\OmegaR)}\times 
\Bp{\DSRRzero{1}{q}(\Omega)\oplus\DSRcomplperRzero{1}{q,r}(\OmegaR)} \ra \\
&\qquad\qquad\qquad\qquad\qquad\qquad\qquad\qquad\qquad\qquad\qquad\qquad\qquad\LR{q}(\Omega)\oplus\LRcomplper{q,r}(\OmegaR),\\
&\ALOseen(\vvel+\wvel,\vpres+\wpres):= \partial_t\wvel  -\nu\Delta(\vvel+\wvel) + \lambda\partial_1(\vvel+\wvel) + \grad(\vpres+\wpres)
\end{aligned}
\end{align}
is a homeomorphism with $\norm{\ALOseeninverse}$ depending only on $q$, $r$, $\Omega$, $\nu$ and $\rey$.
If $q\in(1,\frac{3}{2})$, then  $\norm{\ALOseeninverse}$ depends only on the upper bound $\rey_0$ and \emph{not} on 
$\rey$ itself.
\end{cor}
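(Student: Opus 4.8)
The plan is to reduce the assertion to results established above by splitting the data with the complementary projections $\proj$ and $\projcompl$ of \eqref{intro_defofprojGernericExpression} and by observing that $\ALOseen$ acts \emph{block-diagonally} with respect to the induced decomposition of the domain and target spaces. Write a generic right-hand side as $\fsp+\ftp$ with $\fsp:=\proj(\fsp+\ftp)\in\LR{q}(\Omega)$ time-independent and $\ftp:=\projcompl(\fsp+\ftp)\in\LRcomplper{q,r}(\OmegaR)$; since $\proj$ and $\projcompl$ are bounded complementary projections, this is the canonical decomposition in the direct sums occurring in the statement. As $\proj$ annihilates $\pt$ applied to any $\per$-periodic function, commutes with $\Delta$, $\partial_1$ and $\grad$, fixes the steady-state parts and kills the oscillatory ones, one finds
\begin{align*}
\proj\,\ALOseen(\vvel+\wvel,\vpres+\wpres) &= -\nu\Delta\vvel + \rey\partial_1\vvel + \grad\vpres, \\
\projcompl\,\ALOseen(\vvel+\wvel,\vpres+\wpres) &= \pt\wvel - \nu\Delta\wvel + \rey\partial_1\wvel + \grad\wpres.
\end{align*}
Thus $\ALOseen$ is a homeomorphism precisely when both diagonal blocks are, namely the steady-state Oseen operator from $\xoseensigmazero{q}(\Omega)\times\DSRRzero{1}{q}(\Omega)$ onto $\LR{q}(\Omega)$ and the oscillatory operator from $\WSRpercomplsigmazero{1,2}{q,r}(\OmegaR)\times\DSRcomplperRzero{1}{q,r}(\OmegaR)$ onto $\LRcomplper{q,r}(\OmegaR)$, and $\norm{\ALOseeninverse}$ equals the larger of the norms of the two block inverses.

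\textbf{The oscillatory block.} Here I read off Theorem \ref{lin_maxregcomplspacesThm}. Applied with exponent $q$ it furnishes, for every datum in $\LRcomplper{q}(\OmegaR)^3$, a solution of \eqref{lin_linsystem} in $\WSRcomplper{1,2}{q}(\OmegaR)^3\times\DSRcomplper{1}{q}(\OmegaR)$ with $\norm{\wvel}_{1,2,q}+\norm{\grad\wpres}_q\leq C\,\norm{\ftp}_q$ and $C=C(q,\Omega,\nu,\rey_0)$; after normalizing $\int_{\Omega_\Rzero}\wpres(t,\cdot)\,\dx=0$ for a.e.\ $t$, the uniqueness clause of the theorem forces the additive function $d(t)$ to vanish, because $\int_{\Omega_\Rzero}d(t)\,\dx=\snorm{\Omega_\Rzero}\,d(t)$, so the solution is unique in $\WSRcomplper{1,2}{q}(\OmegaR)^3\times\DSRcomplperRzero{1}{q}(\OmegaR)$. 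For data in the intersection $\LRcomplper{q,r}(\OmegaR)^3$ I invoke the theorem once with $q$ and once with $r$ and use cross-class uniqueness to identify the two solutions; the common pair then lies in $\WSRpercomplsigmazero{1,2}{q,r}(\OmegaR)\times\DSRcomplperRzero{1}{q,r}(\OmegaR)$ and obeys both a priori bounds, the divergence-free and no-slip constraints being part of \eqref{lin_linsystem}. This yields surjectivity of the oscillatory block and a bound on its inverse uniform in $\rey\in(0,\rey_0]$; injectivity is exactly the duality computation in \eqref{lin_maxregcomplspacesThm_dualitycomputation}; and continuity of the block is immediate since $\pt\wvel$, $\grad^2\wvel$, $\rey\partial_1\wvel$ and $\grad\wpres$ each have $\LRcomplper{q,r}(\OmegaR)$-norm dominated by the graph norm of the domain space.

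\textbf{The steady block.} For the steady part I appeal to the classical $\LR{q}$ theory of the three-dimensional steady-state Oseen system in exterior domains: for $q\in(1,2)$ and every $\fsp\in\LR{q}(\Omega)$ there is a unique pair $(\vvel,\vpres)\in\xoseensigmazero{q}(\Omega)\times\DSRRzero{1}{q}(\Omega)$ solving $-\nu\Delta\vvel+\rey\partial_1\vvel+\grad\vpres=\fsp$, $\Div\vvel=0$, $\vvel=0$ on $\partial\Omega$, with $\norm{\vvel}_{\xoseen{q}}+\norm{\grad\vpres}_q\leq C\,\norm{\fsp}_q$, the constant being independent of $\rey\in(0,\rey_0]$ when $q\in(1,\frac{3}{2})$; see \cite{GaldiBookNew} and the references therein. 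This gives bijectivity of the steady block and the asserted bound on its inverse, while its continuity follows from $\grad^2\vvel,\rey\partial_1\vvel,\grad\vpres\in\LR{q}(\Omega)$, all dominated by $\norm{\vvel}_{\xoseen{q}}+\norm{\grad\vpres}_q$. Assembling, $\ALOseeninverse(\fsp+\ftp):=(\vvel+\wvel,\vpres+\wpres)$ where $(\vvel,\vpres)$ solves the steady problem with datum $\fsp$ and $(\wvel,\wpres)$ the oscillatory problem with datum $\ftp$; combining the two block homeomorphisms and recalling the boundedness of $\proj$ and $\projcompl$ yields the homeomorphism property with $\norm{\ALOseeninverse}$ depending only on $q,r,\Omega,\nu,\rey$, and with $\rey$ replaceable by $\rey_0$ when $q\in(1,\frac{3}{2})$, since in that range the constants of both blocks are $\rey$-uniform.

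\textbf{Main difficulty.} The only input that is not a direct consequence of results proved earlier is the steady-state Oseen estimate: one must quote the sharp $\LR{q}$ a priori bound in the anisotropically weighted norm $\norm{\cdot}_{\xoseen{q}}$ and, crucially, that its constant can be chosen independently of the Reynolds number for $q<\frac{3}{2}$. The remaining work — checking the block-diagonal structure, matching the exponents $q$ and $r$ in the oscillatory component via uniqueness, and collecting the bounds — is routine.
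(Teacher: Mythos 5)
Your proposal is correct and takes essentially the same approach as the paper: split via $\proj$, $\projcompl$ into a steady-state block (handled by the classical Oseen $\LR{q}$ theory, Theorem VII.7.1 of \cite{GaldiBookNew}) and an oscillatory block (handled by Theorem \ref{lin_maxregcomplspacesThm}), observing that these projections commute with $\ALOseen$. Your write-up merely fills in details the paper leaves implicit, such as the explicit block-diagonal computation, the pressure normalization ensuring the $\DSRcomplperRzero{1}{q,r}$ constraint, and the cross-class uniqueness used to glue the $q$- and $r$-solutions.
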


\begin{proof}
It is well-known that the steady-state Oseen operator, that is, the Oseen operator from \eqref{ext_lqestThmFull_OseenOprDef} 
restricted to time-independent functions, is a homeomorphism as a mapping
$\ALOseen:\xoseensigmazero{q}(\Omega)\times \DSRRzero{1}{q}(\Omega)\ra\LR{q}(\Omega)$; 
see for example \cite[Theorem VII.7.1]{GaldiBookNew}.
By Theorem \ref{lin_maxregcomplspacesThm}, it follows that also the time-periodic Oseen operator is a homeomorphism as a mapping 
$\ALOseen:\WSRpercomplsigmazero{1,2}{q,r}(\OmegaR)\times\DSRcomplperRzero{1}{q,r}(\OmegaR)\ra\LRcomplper{q,r}(\OmegaR)$.  
Since clearly $\proj$ and $\projcompl$ commute with $\ALOseen$, it further follows that $\ALOseen$ is a homeomorphism as an operator in the setting 
\eqref{ext_lqestThmFull_OseenOprDef}. 
The dependency of $\norm{\ALOseeninverse}$ on the various parameters follows from \cite[Theorem VII.7.1]{GaldiBookNew} and Theorem \ref{lin_maxregcomplspacesThm}. 
\end{proof}

\section{Fully Nonlinear Problem}

Existence of a solution to the fully nonlinear problem \eqref{intro_nspastbody} shall now be established. 
We employ a fixed point argument based on the
estimates established for the linearized system \eqref{lin_linsystem} in the previous section. For this approach to work, we need to assume 
\eqref{intro_uvelinftyneqzero} to ensure that \eqref{lin_linsystem} is indeed a suitable linearization of \eqref{intro_nspastbody}.
Moreover, we need to assume \eqref{intro_bodyvelsingleaxis} to ensure that all terms appearing on the right-hand side after linearizing 
\eqref{intro_nspastbody} are subordinate to the linear operator on the left-hand side.

For convenience, we rewrite \eqref{intro_nspastbody} by replacing $\uvel$ with $\uvel+\bodyvel$ and obtain the following equivalent problem:
\begin{align}\label{existence_nspastbody}
\begin{pdeq}
&\partial_t\uvel + \nsnonlinb{(\uvel-\bodyvel)}{\uvel} = \nu\Delta\uvel - \grad\upres + f && \tin\OmegaR,\\
&\Div\uvel =0 && \tin\OmegaR,\\
&\uvel=\uvelbrd&& \ton\partialOmegaR,\\
&\lim_{\snorm{x}\ra\infty} \uvel(t,x) = 0.
\end{pdeq}
\end{align}
 
\begin{thm}\label{ext_ExistenceNonlinProbThm}
Let $\Omega\subset\R^3$ be an exterior domain of class $\CR{2}$.
Assume that $\bodyvel(t)=-\uvelinfty(t)\eone$ for a $\per$-periodic function $\uvelinfty:\R\ra\R$ with $\rey:=\proj\uvelinfty>0$.
Moreover, let $q\in\big[\frac{6}{5},\frac{4}{3}\big]$. 
There is an $\rey_0>0$ such that for all $\rey\in(0,\rey_0]$ there is an 
$\Cc[ext_ExistenceNonlinProbThmEpsilon]{eps}>0$ such that for all $\f\in\LRper{q}(\OmegaR)^3$, $\uvelinfty\in\LRper{\infty}(\R)$ and 
\begin{align}\label{ext_ExistenceNonlinProbThmTracespace}
\uvelbrd\in\WSRper{1}{q}\bp{\R;\WSR{2-\frac{3-q}{3q}}{\frac{3q}{3-q}}(\partial\Omega)}^3
\end{align}
satisfying
\begin{align}\label{ext_ExistenceNonlinProbThm_SmallnessCond3D}
\norm{\f}_q + \norm{\projcompl \f}_{\frac{3q}{3-q}} + \norm{\projcompl\uvelinfty}_{\infty} + 
\norm{\uvelbrd}_{\WSRper{1}{q}\bp{\R;\WSR{2-\frac{3-q}{3q}}{\frac{3q}{3-q}}(\partial\Omega)}} \leq \const{ext_ExistenceNonlinProbThmEpsilon}
\end{align}
there is a solution 
\begin{align}\label{ext_ExistenceNonlinProbThm_SolClass3D}
(\uvel,\upres)\in
\Bp{\xoseen{q}(\Omega)\oplus\WSRcomplper{1,2}{q,\frac{3q}{3-q}}(\OmegaR)}^3\times 
\Bp{\DSR{1}{q}(\Omega)\oplus\DSRcomplper{1}{q,\frac{3q}{3-q}}(\OmegaR)}
\end{align}
to \eqref{existence_nspastbody}.
\end{thm}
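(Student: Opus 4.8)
The plan is to solve \eqref{existence_nspastbody} by a contraction mapping argument in which the maximal-$\LR{q}$-regularity of the time-periodic Oseen operator supplied by Corollary \ref{ext_lqestThmFull} is the linear building block, the convective term is treated as a subordinate perturbation, and the embeddings of Theorem \ref{SobEmbeddingThm} (for the oscillatory component) together with the embeddings built into the definition of $\xoseen{q}(\Omega)$ (for the steady component) are used to control that perturbation. I expect the closing of the nonlinear estimates — which is exactly what forces $q\in[\tfrac{6}{5},\tfrac{4}{3}]$ and the coupling $r=\tfrac{3q}{3-q}$ appearing throughout \eqref{ext_ExistenceNonlinProbThmTracespace}--\eqref{ext_ExistenceNonlinProbThm_SmallnessCond3D} — to be the main obstacle, with the construction of the solenoidal boundary lift in the time-periodic anisotropic setting a secondary one.

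\textbf{Reduction to homogeneous boundary data.} Writing $\uvelinfty=\rey+\projcompl\uvelinfty$ with $\rey=\proj\uvelinfty>0$ and $\bodyvel=-\uvelinfty\eone$, the convective term splits as $\nsnonlinb{(\uvel-\bodyvel)}{\uvel}=\nsnonlinb{\uvel}{\uvel}+\rey\partial_1\uvel+\projcompl\uvelinfty\,\partial_1\uvel$; I move the term $\rey\partial_1\uvel$ to the left so that it becomes part of the operator $\ALOseen$ of Corollary \ref{ext_lqestThmFull}. Next I would construct, by a standard solenoidal extension together with a Bogovski\u{\i} correction (cf. \cite[Theorem III.3.3]{GaldiBookNew}), a divergence-free field $\liftUvel=\proj\liftUvel+\projcompl\liftUvel$, supported in $\Omega_\Rzero$, with $\liftUvel=\uvelbrd$ on $\partialOmegaR$ and
\begin{align*}
\norm{\proj\liftUvel}_{\xoseen{q}}+\norm{\projcompl\liftUvel}_{\WSRcomplper{1,2}{q,\frac{3q}{3-q}}}+\norm{\pt\liftUvel-\nu\Delta\liftUvel+\rey\partial_1\liftUvel}_{\LR{q}(\Omega)\oplus\LRcomplper{q,\frac{3q}{3-q}}(\OmegaR)}\leq\Cc{c}\,\norm{\uvelbrd}_{\ast},
\end{align*}
where $\norm{\cdot}_{\ast}$ denotes the norm in \eqref{ext_ExistenceNonlinProbThmTracespace}; the regularity class \eqref{ext_ExistenceNonlinProbThmTracespace} is tailored precisely so that this Oseen residual lands in the data space of $\ALOseen$, using in particular $\WSR{1}{q}(0,\per)\embeds\LR{\infty}(0,\per)$ to absorb the time derivative. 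Substituting $\uvel=\wvel+\liftUvel$, $\upres=\wpres$, problem \eqref{existence_nspastbody} becomes
\begin{align*}
\ALOseen(\wvel,\wpres)=\f-\bp{\pt\liftUvel-\nu\Delta\liftUvel+\rey\partial_1\liftUvel}-\nsnonlinb{(\wvel+\liftUvel)}{(\wvel+\liftUvel)}-\projcompl\uvelinfty\,\partial_1(\wvel+\liftUvel)=:\functional(\wvel),
\end{align*}
to be solved for $(\wvel,\wpres)$ in the domain of $\ALOseen$.

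\textbf{The fixed-point map and the linear bound.} I set $\fixedpointmap(\wvel,\wpres):=\ALOseeninverse\big[\proj\functional(\wvel)+\projcompl\functional(\wvel)\big]$, which is well defined as soon as $\functional(\wvel)\in\LR{q}(\Omega)\oplus\LRcomplper{q,\frac{3q}{3-q}}(\OmegaR)$, and by Corollary \ref{ext_lqestThmFull}, $\norm{\ALOseeninverse}\leq\Cc{c}$ with a constant independent of $\rey\in(0,\rey_0]$ because $q<\tfrac{3}{2}$. A fixed point of $\fixedpointmap$ yields, after adding back $\liftUvel$, a solution of \eqref{existence_nspastbody}. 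So it remains to verify that $\functional$ maps the domain of $\ALOseen$ into $\LR{q}(\Omega)\oplus\LRcomplper{q,\frac{3q}{3-q}}(\OmegaR)$ with a quadratic (plus linear) bound, and that $\fixedpointmap$ contracts a small ball.

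\textbf{Nonlinear estimates (the crux).} Write $\wvel=\vvel+\tvvel$ with $\vvel:=\proj\wvel\in\xoseensigmazero{q}(\Omega)$, $\tvvel:=\projcompl\wvel\in\WSRpercomplsigmazero{1,2}{q,\frac{3q}{3-q}}(\OmegaR)$. Since $\proj\tvvel=0$, the steady part of the convective term is $\nsnonlinb{\vvel}{\vvel}+\proj(\nsnonlinb{\tvvel}{\tvvel})$ up to $\liftUvel$-contributions: for the pure steady term I would use the embeddings contained in the definition of $\xoseen{q}(\Omega)$, namely $\vvel\in\LR{\frac{2q}{2-q}}(\Omega)$, $\grad\vvel\in\LR{\frac{4q}{4-q}}(\Omega)$ and $\grad^2\vvel\in\LR{q}(\Omega)$, the last giving $\grad\vvel\in\LR{\frac{3q}{3-q}}(\Omega)$ by Sobolev embedding, hence $\grad\vvel\in\LR{2}(\Omega)$ by interpolation exactly when $q\in[\tfrac{6}{5},\tfrac{4}{3}]$, so that H\"older's inequality gives $\nsnonlinb{\vvel}{\vvel}\in\LR{q}(\Omega)$; and $\proj(\nsnonlinb{\tvvel}{\tvvel})\in\LR{q}(\Omega)$ follows from $\norm{\proj g}_{\LR{q}(\Omega)}\leq\norm{g}_{q}$ once $\nsnonlinb{\tvvel}{\tvvel}\in\LR{q}(\OmegaR)$ is known. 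For the oscillatory part and the mixed terms $\nsnonlinb{\vvel}{\tvvel}$, $\nsnonlinb{\tvvel}{\vvel}$, $\nsnonlinb{\tvvel}{\tvvel}$, the $\liftUvel$-terms and $\projcompl\uvelinfty\,\partial_1(\cdot)$, I would invoke Theorem \ref{SobEmbeddingThm} with suitable $\alpha,\beta\in[0,1]$ and base exponents $q$ and $\tfrac{3q}{3-q}$ to control $\tvvel$ and $\grad\tvvel$ in mixed norms $\LRper{r_0}\np{\R;\LR{p_0}(\Omega)}$, then close the estimates by mixed-norm H\"older and interpolation, using $\WSR{1}{q}(0,\per)\embeds\LR{\infty}(0,\per)$ for $\projcompl\uvelinfty$; altogether $\nsnonlinb{(\wvel+\liftUvel)}{(\wvel+\liftUvel)}$ and $\projcompl\uvelinfty\,\partial_1(\wvel+\liftUvel)$ then lie in $\LR{q}(\Omega)\oplus\LRcomplper{q,\frac{3q}{3-q}}(\OmegaR)$. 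The hard point is that the quadratic term must be summable simultaneously in $\LR{q}(\Omega)$ on the steady side and in $\LRcomplper{q}(\OmegaR)\cap\LRcomplper{\frac{3q}{3-q}}(\OmegaR)$ on the oscillatory side, which is what pins down the admissible range $q\in[\tfrac{6}{5},\tfrac{4}{3}]$ and the exponent $\tfrac{3q}{3-q}$.

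\textbf{Contraction and conclusion.} Combining $\norm{\ALOseeninverse}\leq\Cc{c}$ with the above, one gets for fixed $\rey\in(0,\rey_0]$
\begin{align*}
\norm{\fixedpointmap(\wvel,\wpres)}\leq\Cc{c}\,\bp{\norm{\f}_q+\norm{\projcompl\f}_{\frac{3q}{3-q}}+\norm{\projcompl\uvelinfty}_\infty+\norm{\uvelbrd}_{\ast}}+\Cc{c}\,\bp{\norm{(\wvel,\wpres)}^2+\bp{\norm{\projcompl\uvelinfty}_\infty+\norm{\uvelbrd}_{\ast}}\norm{(\wvel,\wpres)}},
\end{align*}
together with the matching Lipschitz estimate for $\fixedpointmap(\wvel^1,\wpres^1)-\fixedpointmap(\wvel^2,\wpres^2)$ with Lipschitz constant controlled by $\Cc{c}\,\bp{\norm{(\wvel^1,\wpres^1)}+\norm{(\wvel^2,\wpres^2)}+\norm{\projcompl\uvelinfty}_\infty+\norm{\uvelbrd}_{\ast}}$, which uses the bilinearity of the convective term and the linearity of the remaining perturbations. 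Hence, fixing $\rey_0$ so that Corollary \ref{ext_lqestThmFull} applies and then, for each $\rey\in(0,\rey_0]$, choosing the radius of a closed ball in the domain of $\ALOseen$ and the smallness parameter $\const{ext_ExistenceNonlinProbThmEpsilon}=\const{ext_ExistenceNonlinProbThmEpsilon}(\rey)$ in \eqref{ext_ExistenceNonlinProbThm_SmallnessCond3D} small enough, $\fixedpointmap$ maps that ball contractively into itself; the unique fixed point $(\wvel,\wpres)$, together with $(\uvel,\upres):=(\wvel+\liftUvel,\wpres)$, gives a solution in the class \eqref{ext_ExistenceNonlinProbThm_SolClass3D}, since by construction $\liftUvel$ belongs to $\bp{\xoseen{q}(\Omega)\oplus\WSRcomplper{1,2}{q,\frac{3q}{3-q}}(\OmegaR)}^3$. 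The dependence of $\const{ext_ExistenceNonlinProbThmEpsilon}$ on $\rey$ is unavoidable in this scheme, as it enters only through the $\rey$-weights in the norm $\norm{\cdot}_{\xoseen{q}}$.
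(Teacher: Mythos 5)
Your overall strategy agrees with the paper's: a contraction argument on $\fixedpointspace{q}$ driven by $\ALOseeninverse$ from Corollary \ref{ext_lqestThmFull}, with the quadratic terms controlled on the steady side by the embeddings built into the norm of $\xoseen{q}(\Omega)$ (giving $\grad\vvel\in\LR{2}(\Omega)$ by interpolation between $\LR{\frac{4q}{4-q}}$ and $\LR{\frac{3q}{3-q}}$, which is exactly the constraint $q\in[\tfrac{6}{5},\tfrac{4}{3}]$) and on the oscillatory side by Theorem \ref{SobEmbeddingThm}, and with the smallness parameter $\const{ext_ExistenceNonlinProbThmEpsilon}$ chosen $\rey$-dependent to absorb the $\rey$-weights.

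The genuine gap is in the boundary lift. You build a single, compactly supported, solenoidal lift $\liftUvel$ from an extension plus a Bogovski\u{\i} correction (citing \cite[Theorem III.3.3]{GaldiBookNew}). That construction requires $\int_{\partial\Omega}\uvelbrd(t,\cdot)\cdot n\,\dS=0$ for a.e.\ $t$: the divergence of the crude extension must have zero mean over $\Omega_\Rzero$ for the Bogovski\u{\i} operator to apply, and by Gauss that mean equals precisely the boundary flux. Nothing in the hypotheses imposes this; in particular the steady flux $\int_{\partial\Omega}\proj\uvelbrd\cdot n\,\dS$ can be nonzero, and in that case no compactly supported solenoidal lift exists. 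The paper avoids this by lifting $\proj\uvelbrd$ and $\projcompl\uvelbrd$ separately: the steady part is lifted by solving the exterior Oseen problem \eqref{ext_ExistenceNonlinProbThm_liftV}, whose solution, by \cite[Theorem VII.7.1]{GaldiBookNew}, lies in $\xoseen{q}(\Omega)$ with the correct $\rey$-weighted norm \emph{and} can carry a nonzero flux off to infinity; the oscillatory part is lifted by solving the Stokes-resolvent-type problem \eqref{ext_ExistenceNonlinProbThm_liftW}, which places $\liftWvel$ directly in $\WSRcomplper{1,2}{q,\frac{3q}{3-q}}(\OmegaR)$. To repair your argument you would need to split off a flux carrier of decay $\sim\snorm{x}^{-2}$ before applying Bogovski\u{\i} and verify it belongs to $\xoseen{q}(\Omega)$ with the required $\rey$-dependence — at which point you have essentially reconstructed the paper's two-part lift. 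A lesser issue: the embedding $\WSR{1}{q}(0,\per)\embeds\LR{\infty}(0,\per)$ you invoke controls $\uvelbrd$ itself but not $\partial_t\uvelbrd$, so it does not, as written, justify that the time derivative of the lift lies in $\LRcomplper{\frac{3q}{3-q}}(\OmegaR)$.
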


\begin{proof}
In order to ``lift'' the boundary values in \eqref{existence_nspastbody}, that is, rewrite the system as one of homogeneous boundary values, a solution  
$(\liftWvel,\liftWpres)\in\WSRcomplper{1,2}{q,\frac{3q}{3-q}}(\OmegaR)\times\DSRcomplper{1}{q,\frac{3q}{3-q}}(\OmegaR)$ to 
\begin{align}\label{ext_ExistenceNonlinProbThm_liftW}
\begin{pdeq}
-\nu\Delta\liftWvel + \grad\liftWpres &= \liftWvel && \tin\OmegaR,\\
\Div\liftWvel &= 0 && \tin\OmegaR,\\
\liftWvel &=\projcompl\uvelbrd && \ton \partialOmegaR,
\end{pdeq}
\end{align}
is introduced.
Observe that \eqref{ext_ExistenceNonlinProbThm_liftW} is a Stokes resolvent-type problem. One can therefore 
use standard methods to solve \eqref{ext_ExistenceNonlinProbThm_liftW} in $\per$-time periodic function spaces and obtain a solution that satisfies 
\begin{align}\label{ext_ExistenceNonlinProbThm_liftWest}
\forall r\in\Bpb{1,\frac{3q}{3-q}}:\ \norm{\liftWvel}_{1,2,r} + \norm{\grad\liftWpres}_{r} \leq \Cc{c} 
\norm{\uvelbrd}_{\WSRper{1}{q}\bp{\R;\WSR{2-\frac{3-q}{3q}}{\frac{3q}{3-q}}(\partial\Omega)}},
\end{align}   
where $\Cclast{c}=\Cclast{c}(r,q,\Omega,\nu)$. Furthermore, classical results for the steady-state Oseen problem \cite[Theorem VII.7.1]{GaldiBookNew}
ensure existence of a solution $(\liftVvel,\liftVpres)\in\xoseen{q}(\Omega)\times\DSR{1}{q}(\Omega)$ to    
\begin{align}\label{ext_ExistenceNonlinProbThm_liftV}
\begin{pdeq}
-\nu\Delta\liftVvel + \rey\partial_1\liftVvel + \grad\liftVpres &= 0 && \tin\Omega,\\
\Div\liftVvel &= 0 && \tin\Omega,\\
\liftVvel &=\proj\uvelbrd && \ton \partial\Omega,
\end{pdeq}
\end{align}
which satisfies, since $q\geq\frac{6}{5}$ implies $\frac{3q}{3-q}\geq 2$,
\begin{align}\label{ext_ExistenceNonlinProbThm_liftVest}
\forall r\in(1,2):\ \norm{\liftVvel}_{\xoseen{r}(\Omega)} + \norm{\grad\liftVpres}_{r}
\leq \Cc{c} \norm{\proj\uvelbrd}_{\WSR{2-\frac{3-q}{3q}}{\frac{3q}{3-q}}(\partial\Omega)},
\end{align}
where $\Cclast{c}=\Cclast{c}(r,\Omega,\nu)$.  
We shall now establish existence of a solution $(\uvel,\upres)$ to \eqref{existence_nspastbody} on the form 
\begin{align}\label{ext_ExistenceNonlinProbThm_solform}
\uvel=\vvel+\liftVvel+\wvel+\liftWvel,\quad 
\upres=\vpres+\liftVpres+\wpres+\liftWpres,
\end{align}
where $(\vvel,\vpres)\in\xoseen{q}(\Omega)\times\DSR{1}{q}(\Omega)$ is a solution
to the steady-state problem
\begin{align}\label{ext_ExistenceNonlinProbThm_vveleq}
\begin{pdeq}
-\nu\Delta\vvel + \rey\partial_1\vvel + \grad\vpres &= \rhsvvel(\vvel,\wvel,\liftVvel,\liftWvel) && \tin\Omega,\\
\Div\vvel &= 0 && \tin\Omega,\\
\vvel &=0 && \ton \partial\Omega,
\end{pdeq}
\end{align}
with
\begin{align*}
\rhsvvel(\vvel,\wvel,\liftVvel,\liftWvel):=
&-\nsnonlin{\vvel}
-\nsnonlinb{\vvel}{\liftVvel} 
-\nsnonlinb{\liftVvel}{\vvel}
-\nsnonlinb{\liftVvel}{\liftVvel} \\
&-\proj\bb{\nsnonlin{\wvel}}
-\proj\bb{\nsnonlinb{\wvel}{\liftWvel}}
-\proj\bb{\nsnonlinb{\liftWvel}{\wvel}}
-\proj\bb{\nsnonlinb{\liftWvel}{\liftWvel}}\\
&-\proj\bb{\projcompl\uvelinftyscalar\, \partial_1\wvel}
-\proj\bb{\projcompl\uvelinftyscalar\, \partial_1\liftWvel}
+ \proj f,
\end{align*}
and $(\wvel,\wpres)\in\WSRcomplper{1,2}{q,\frac{3q}{3-q}}(\OmegaR)\times \DSRcomplper{1}{q,\frac{3q}{3-q}}(\OmegaR)$ a solution to 
\begin{align}\label{ext_ExistenceNonlinProbThm_wveleq}
\begin{pdeq}
\partial_t\wvel-\nu\Delta\wvel + \rey\partial_1\wvel + \grad\wpres &= \rhswvel(\vvel,\wvel,\liftVvel,\liftWvel) && \tin\OmegaR,\\
\Div\wvel &= 0 && \tin\OmegaR,\\
\wvel &=0 && \ton \partialOmegaR,
\end{pdeq}
\end{align}
with 
\begin{align*}
\rhswvel(\vvel,\wvel,\liftVvel,\liftWvel):=
&
-\projcompl\bb{\nsnonlinb{\wvel}{\wvel}}
-\projcompl\bb{\nsnonlinb{\wvel}{\liftWvel}}
-\projcompl\bb{\nsnonlinb{\liftWvel}{\wvel}}
-\projcompl\bb{\nsnonlinb{\liftWvel}{\liftWvel}}
\\
&
-\nsnonlinb{\vvel}{\wvel}
-\nsnonlinb{\vvel}{\liftWvel}
-\nsnonlinb{\wvel}{\vvel}
-\nsnonlinb{\wvel}{\liftVvel}
\\
&
-\nsnonlinb{\liftVvel}{\wvel}
-\nsnonlinb{\liftVvel}{\liftWvel}
-\nsnonlinb{\liftWvel}{\vvel}
-\nsnonlinb{\liftWvel}{\liftVvel}
\\
&
-\projcompl\uvelinftyscalar\,\partial_1\vvel
-\projcompl\uvelinftyscalar\,\partial_1\liftVvel
-\projcompl\bb{\projcompl\uvelinftyscalar\,\partial_1\wvel}
-\projcompl\bb{\projcompl\uvelinftyscalar\,\partial_1\liftWvel} 
\\
&-\partial_t\liftWvel-\liftWvel+\rey\partial_1\liftWvel+\projcompl f. 
\end{align*}
The systems \eqref{ext_ExistenceNonlinProbThm_vveleq} and \eqref{ext_ExistenceNonlinProbThm_wveleq} emerge as the result of inserting 
\eqref{ext_ExistenceNonlinProbThm_solform} into \eqref{existence_nspastbody} and subsequently applying first $\proj$ then $\projcompl$ to the equations.
Recalling the function spaces introduced in Corollary \ref{ext_lqestThmFull}, we define the Banach space
\begin{align}\label{ext_ExistenceNonlinProbThm_DefKspace}
\fixedpointspace{q}(\OmegaR):=\xoseensigmazero{q}(\Omega)\oplus\WSRpercomplsigmazero{1,2}{q,\frac{3q}{3-q}}(\OmegaR)\times 
\DSRRzero{1}{q}(\Omega)\oplus\DSRcomplperRzero{1}{q,\frac{3q}{3-q}}(\OmegaR).
\end{align}
We can obtain solutions $(\vvel,\vpres)$ and $(\wvel,\wpres)$ to 
\eqref{ext_ExistenceNonlinProbThm_vveleq} and
\eqref{ext_ExistenceNonlinProbThm_wveleq}, respectively,
as a fixed point of the mapping
\begin{align*}
&\fixedpointmap:\fixedpointspace{q}(\OmegaR)\ra\fixedpointspace{q}(\OmegaR),\\
&\fixedpointmap(\vvel+\wvel,\vpres+\wpres):=
\ALOseeninverse\bp{\rhsvvel(\vvel,\wvel,\liftVvel,\liftWvel)+\rhswvel\np{\vvel,\wvel,\liftVvel,\liftWvel}}.
\end{align*}
We shall show that $\fixedpointmap$ is a contracting self-mapping on a ball of sufficiently small radius. For 
this purpose, let $\rho>0$ and consider some $(\vvel+\wvel,\vpres+\wpres)\in\fixedpointspace{q}\cap\B_\rho$.
Suitable estimates of $\rhsvvel$ and $\rhswvel$ in combination with a smallness assumption on $\const{ext_ExistenceNonlinProbThmEpsilon}$
from \eqref{ext_ExistenceNonlinProbThm_SmallnessCond3D} are needed to guarantee that $\fixedpointmap$ has the desired properties. 
We first estimate the terms of $\rhsvvel$. Since $q\in[\frac{6}{5},\frac{4}{3}]$ implies $\frac{4q}{4-q}\leq 2 \leq \frac{3q}{3-q}$, 
we can employ first H\"older's inequality and then interpolation to estimate
\begin{align*}
\norm{\nsnonlin{\vvel}}_q &\leq \norm{\vvel}_{\frac{2q}{2-q}}\norm{\grad\vvel}_2
\leq \rey^{-\half}\, \norm{\vvel}_{\xoseen{q}}\,\norm{\grad\vvel}_{\frac{4q}{4-q}}^\theta\,\norm{\grad\vvel}_{\frac{3q}{3-q}}^{1-\theta} 
\end{align*}
with $\theta=\frac{10q-12}{q}$. It thus follows by the Sobolev embedding $\WSR{1}{q}(\Omega)\embeds\LR{\frac{3q}{3-q}}(\Omega)$ that 
\begin{align}\label{ext_ExistenceNonlinProbThm_nsnonlintermest}
\begin{aligned}
\norm{\nsnonlin{\vvel}}_q 
\leq \Cc{c}\,\rey^{-\half-\frac{\theta}{4}}\, \norm{\vvel}_{\xoseen{q}}^{1+\theta}\,\norm{\grad^2\vvel}_{q}^{1-\theta} 
&\leq \Cclast{c}\,\rey^{-\frac{3q-3}{q}}\norm{\vvel}_{\xoseen{q}}^2 \leq \Cclast{c}\rey^{-\frac{3q-3}{q}}\rho^2.
\end{aligned}
\end{align}
The other terms in the definition of $\rhsvvel$ can be estimated in a similar fashion to conclude in combination with  
\eqref{ext_ExistenceNonlinProbThm_SmallnessCond3D} that
\begin{align*}
\norm{\rhsvvel(\vvel,\wvel,\liftVvel,\liftWvel)}_{\LR{q}(\Omega)}\leq\Cc{c}
\bp{\rey^{-\frac{3q-3}{q}}\rho^2 +  \rey^{-\frac{1}{2}}\rho\const{ext_ExistenceNonlinProbThmEpsilon} +\rho\const{ext_ExistenceNonlinProbThmEpsilon}+\const{ext_ExistenceNonlinProbThmEpsilon}^2+\const{ext_ExistenceNonlinProbThmEpsilon}}.  
\end{align*}
An estimate of $\rhswvel$ is required both in the $\LRper{q}(\OmegaR)$ and $\LRper{\frac{3q}{3-q}}(\OmegaR)$ norm. 
Observe that  
\begin{align}\label{ext_ExistenceNonlinProbThm_noteworthy}
\norm{\projcompl\uvelinftyscalar\,\partial_1\vvel}_{\LRper{q}(\OmegaR)} \leq \Cc{c} \norm{\projcompl\uvelinftyscalar}_\infty \norm{\partial_1\vvel}_{q}
\leq \Cclast{c}\rey^{-1}\const{ext_ExistenceNonlinProbThmEpsilon}\rho. 
\end{align}
With the help of the embedding properties in Theorem \ref{SobEmbeddingThm},
the other terms in $\rhswvel$ can be estimated to obtain
\begin{align*}
\norm{\rhswvel(\vvel,\wvel,\liftVvel,\liftWvel)}_{\LRper{q}(\OmegaR)}\leq\Cc{c}
\bp{\rey^{-1}\const{ext_ExistenceNonlinProbThmEpsilon}\rho+ \rho^2+\rho\const{ext_ExistenceNonlinProbThmEpsilon}+\const{ext_ExistenceNonlinProbThmEpsilon}^2+\rey\const{ext_ExistenceNonlinProbThmEpsilon}+\const{ext_ExistenceNonlinProbThmEpsilon}}.  
\end{align*}
The embedding properties can also be used to establish an $\LRcomplper{\frac{3q}{3-q}}(\OmegaR)$ estimate of $\rhswvel$. For example, 
\begin{align*}
\norm{\nsnonlinb{\wvel}{\wvel}}_{\LRper{\frac{3q}{3-q}}(\OmegaR)}\leq 
\Cc{c}\norm{\wvel}_{\LRper{\frac{3q}{3-q}}\np{\R;\LR{\infty}(\Omega)}}
\norm{\grad\wvel}_{\LRper{\infty}\np{\R;\LR{\frac{3q}{3-q}}(\Omega)}}
\leq \Cc{c}\rho^2,
\end{align*}
where Theorem \ref{SobEmbeddingThm} is utilized with $\alpha=0$ and $\beta=1$ in the last inequality.
For this particular utilization of Theorem \ref{SobEmbeddingThm}, it is required that $q\geq \frac{6}{5}$. 
Further note that 
\begin{align*}
\norm{\projcompl\uvelinftyscalar\,\partial_1\vvel}_{\frac{3q}{3-q}}
\leq \const{ext_ExistenceNonlinProbThmEpsilon}\norm{\vvel}_{\xoseen{q}}
\leq \const{ext_ExistenceNonlinProbThmEpsilon}\rho,
\end{align*}
which explains the choice of the exponent $\frac{3q}{3-q}$ in the setting of the mapping $\fixedpointmap$.
The rest of the terms in $\rhswvel$ can be estimated to conclude
\begin{align*}
\norm{\rhswvel(\vvel,\wvel,\liftVvel,\liftWvel)}_{\LRper{\frac{3q}{3-q}}(\OmegaR)}\leq\Cc{c}\bp{\rho^2+\rho\const{ext_ExistenceNonlinProbThmEpsilon}
+\const{ext_ExistenceNonlinProbThmEpsilon}^2+\rey\const{ext_ExistenceNonlinProbThmEpsilon}+\const{ext_ExistenceNonlinProbThmEpsilon}}.  
\end{align*}
We can now conclude from Corollary \ref{ext_lqestThmFull}, recall that  $\norm{\ALOseeninverse}$ is independent on $\rey$, the estimate
\begin{align*}
\norm{\fixedpointmap(\vvel+\wvel,\vpres+\wpres)}_{\fixedpointspace{q}}
&\leq \norm{\ALOseeninverse}\cdot \Bp{\norm{\rhsvvel}_{\LR{q}(\Omega)}+\norm{\rhswvel}_{\LRper{q,\frac{3q}{3-q}}(\OmegaR)}}\\
&\leq 
\Cc[ext_ExistenceNonlinProbThmFixedpointconst]{c}\bp{
\rey^{-\frac{3q-3}{q}}\rho^2 
+\rey^{-1}\const{ext_ExistenceNonlinProbThmEpsilon}\rho
+\rey^{-\frac{1}{2}}\rho\const{ext_ExistenceNonlinProbThmEpsilon}
+\rho\const{ext_ExistenceNonlinProbThmEpsilon}
+\const{ext_ExistenceNonlinProbThmEpsilon}^2
+\rey\const{ext_ExistenceNonlinProbThmEpsilon}
+\const{ext_ExistenceNonlinProbThmEpsilon}
}.
\end{align*}
In particular, $\fixedpointmap$ becomes a self-mapping on $\B_\rho$ if 
\begin{align*}
\const{ext_ExistenceNonlinProbThmFixedpointconst}\bp{
\rey^{-\frac{3q-3}{q}}\rho^2 
+\rey^{-1}\const{ext_ExistenceNonlinProbThmEpsilon}\rho
+\rey^{-\frac{1}{2}}\rho\const{ext_ExistenceNonlinProbThmEpsilon}
+\rho\const{ext_ExistenceNonlinProbThmEpsilon}
+\const{ext_ExistenceNonlinProbThmEpsilon}^2
+\rey\const{ext_ExistenceNonlinProbThmEpsilon}
+\const{ext_ExistenceNonlinProbThmEpsilon}
}
\leq \rho.
\end{align*}
One may choose $\const{ext_ExistenceNonlinProbThmEpsilon}:=\rey^2$ and $\rho:=\rey$ 
to find the above inequality satisfied for sufficiently small $\rey$. For such choice of parameters, one may further verify that $\fixedpointmap$ is also a contraction. 
By the contraction mapping principle, existence of a fixed point for $\fixedpointmap$ follows. This concludes the proof.
\end{proof}

\begin{rem}
As for classical anisotropic Sobolev spaces, the trace operator for time-periodic Sobolev spaces is continuous and surjective
for any $r\in(1,\infty)$ in the setting
\begin{align*}
\trace:\WSRper{1,2}{r}(\OmegaR)\ra\WSRper{1-\frac{1}{2r},2-\frac{1}{r}}{r}\np{\partialOmegaR}:=
\WSRper{1-\frac{1}{2r}}{r}\bp{\R;\LR{r}(\Omega)}\cap\LR{r}\bp{\R;\WSR{2-\frac{1}{r}}{r}(\Omega)}.
\end{align*} 
Consequently, for the Sobolev space in \eqref{ext_ExistenceNonlinProbThm_SolClass3D}, in which a solution $\uvel$ is established in 
Theorem \ref{ext_ExistenceNonlinProbThm}, we find that
\begin{align}\label{nonlin_maxregtracespace}
\trace:  \WSRcomplper{1,2}{q,\frac{3q}{3-q}}(\OmegaR) \ra \WSRper{1-\frac{3-q}{6q},2-\frac{3-q}{3q}}{\frac{3q}{3-q}}\np{\partialOmegaR}
\end{align}
is continuous and surjective. It therefore seems natural that Theorem \ref{ext_ExistenceNonlinProbThm} would hold for arbitrary boundary values $\uvelbrd$ 
in the space on the right-hand side in \eqref{nonlin_maxregtracespace}. In other words, it seems we are requiring 
too much regularity on $\uvelbrd$ in \eqref{ext_ExistenceNonlinProbThmTracespace}. We leave as an open question as to whether or not the regularity assumptions on the boundary values can be weakened. 
\end{rem}

%%%%%%%%%%%%%%%%%%%%%%%%%%%%%%%%%%%%%%%%%%%%%%%%%%%%%%%%%%%%%%
%%          Bibliography                                    %%
%%%%%%%%%%%%%%%%%%%%%%%%%%%%%%%%%%%%%%%%%%%%%%%%%%%%%%%%%%%%%%
\bibliographystyle{abbrv}

\begin{thebibliography}{10}

\bibitem{NagelEngelBook}
K.-J. Engel and R.~Nagel.
\newblock {\em One-parameter semigroups for linear evolution equations}, volume
  194 of {\em Graduate Texts in Mathematics}.
\newblock Springer-Verlag, New York, 2000.

\bibitem{GaldiSohr2004}
G.~Galdi and H.~Sohr.
\newblock Existence and uniqueness of time-periodic physically reasonable
  {N}avier-{S}tokes flow past a body.
\newblock {\em Arch. Ration. Mech. Anal.}, 172(3):363--406, 2004.

\bibitem{GaldiBookNew}
G.~P. Galdi.
\newblock {\em {An introduction to the mathematical theory of the Navier-Stokes
  equations. Steady-state problems. 2nd ed.}}
\newblock {New York: Springer}, 2011.

\bibitem{GaldiTP2D12}
G.~P. {Galdi}.
\newblock {Existence and uniqueness of time-periodic solutions to the
  Navier-Stokes equations in the whole plane.}
\newblock {\em {Discrete Contin. Dyn. Syst., Ser. S}}, 6(5):1237--1257, 2013.

\bibitem{GaldiTPFlowMovingCylinder}
G.~P. {Galdi}.
\newblock {On time-periodic flow of a viscous liquid past a moving cylinder.}
\newblock {\em {Arch. Ration. Mech. Anal.}}, 210(2):451--498, 2013.

\bibitem{GaldiSilvestre_Per06}                                                                                                                                                                                                               
G.~P. Galdi and A.~L. Silvestre.                                                                                    
\newblock {Existence of time-periodic solutions to the Navier-Stokes equation                                       s                                                                                                                   
  around a moving body.}                                                                                            
\newblock {\em Pac. J. Math.}, 223(2):251--267, 2006.                                                               
                                                                                                                    
\bibitem{GaldiSilvestre_Per09}                                                                                      
G.~P. Galdi and A.~L. Silvestre.                                                                                    
\newblock {On the motion of a rigid body in a Navier-Stokes liquid under the                                        
  action of a time-periodic force.}                                                                                 
\newblock {\em Indiana Univ. Math. J.}, 58(6):2805--2842, 2009.                                                     
                                                                                                                    
\bibitem{HieberTP2015}                                                                                              
M.~{Geissert}, M.~{Hieber}, and T.~H. {Nguyen}.                                                                     
\newblock {A general approach to time periodic incompressible viscous fluid                                         
  flow problems.}                                                                                                   
\newblock {\em {Arch. Ration. Mech. Anal.}}, 220(3):1095--1118, 2016.                                               
                                                                                                                    
\bibitem{Giga_StokesSemigroupAna81}                                                                                 
Y.~Giga.                                                                                                            
\newblock Analyticity of the semigroup generated by the {S}tokes operator in                                        
  {$L_{r}$} spaces.                                                                                                 
\newblock {\em Math. Z.}, 178(3):297--329, 1981.                                                                    

\bibitem{GigaSohrLpEst91}
Y.~{Giga} and H.~{Sohr}.
\newblock {Abstract $L\sp p$ estimates for the Cauchy problem with applications
  to the Navier-Stokes equations in exterior domains.}
\newblock {\em {J. Funct. Anal.}}, 102(1):72--94, 1991.

\bibitem{Grafakos1}
L.~Grafakos.
\newblock {\em {Classical Fourier analysis. 2nd ed.}}
\newblock {New York, NY: Springer}, 2008.

\bibitem{Grafakos2}
L.~Grafakos.
\newblock {\em {Modern Fourier analysis. 2nd ed.}}
\newblock {New York, NY: Springer}, 2009.

\bibitem{KanielShinbrot67}
S.~Kaniel and M.~Shinbrot.
\newblock A reproductive property of the {N}avier-{S}tokes equations.
\newblock {\em Arch. Rational Mech. Anal.}, 24:363--369, 1967.

\bibitem{KozonoNakao96}
H.~Kozono and M.~Nakao.
\newblock Periodic solutions of the {N}avier-{S}tokes equations in unbounded
  domains.
\newblock {\em Tohoku Math. J. (2)}, 48(1):33--50, 1996.

\bibitem{habil}
M.~Kyed.
\newblock {Time-Periodic Solutions to the Navier-Stokes Equations}.
\newblock {\em {Habilitationsschrift, Technische Universit{\"a}t Darmstadt}},
  2012.

\bibitem{KyedExRegTPNS2014}
M.~Kyed.
\newblock {Existence and regularity of time-periodic solutions to the
  three-dimensional Navier-Stokes equations}.
\newblock {\em {Nonlinearity}}, 27(12):2909--2935, 2014.

\bibitem{KyedMaxregTPNS2014}
M.~Kyed.
\newblock {Maximal regularity of the time-periodic linearized Navier-Stokes
  system}.
\newblock {\em {J. Math. Fluid Mech.}}, 16(3):523--538, 2014.

\bibitem{Kyed_FundsolTPStokes2016}
M.~Kyed.
\newblock A fundamental solution to the time-periodic stokes equations.
\newblock {\em {J. Math. Anal. Appl.}}, 437(1):708â719, 2016.

\bibitem{Maremonti_TimePer91}
P.~Maremonti.
\newblock Existence and stability of time-periodic solutions to the
  {N}avier-{S}tokes equations in the whole space.
\newblock {\em Nonlinearity}, 4(2):503--529, 1991.

\bibitem{Maremonti_HalfSpace91}
P.~Maremonti.
\newblock {Some theorems of existence for solutions of the Navier-Stokes
  equations with slip boundary conditions in half-space.}
\newblock {\em Ric. Mat.}, 40(1):81--135, 1991.

\bibitem{MaremontiPadula96}
P.~Maremonti and M.~Padula.
\newblock Existence, uniqueness and attainability of periodic solutions of the
  {N}avier-{S}tokes equations in exterior domains.
\newblock {\em Zap. Nauchn. Sem. S.-Peterburg. Otdel. Mat. Inst. Steklov.
  (POMI)}, 233(27):142--182, 1996.

\bibitem{MiyakawaTeramoto82}
T.~Miyakawa and Y.~Teramoto.
\newblock Existence and periodicity of weak solutions of the {N}avier-{S}tokes
  equations in a time dependent domain.
\newblock {\em Hiroshima Math. J.}, 12(3):513--528, 1982.

\bibitem{Morimoto1972}
H.~Morimoto.
\newblock {On existence of periodic weak solutions of the Navier-Stokes
  equations in regions with periodically moving boundaries.}
\newblock {\em J. Fac. Sci., Univ. Tokyo, Sect. I A}, 18:499--524, 1972.

\bibitem{Nguyen2014}
T.~H. Nguyen.
\newblock {Periodic Motions of Stokes and Navier-Stokes Flows Around a Rotating
  Obstacle}.
\newblock {\em Arch. Ration. Mech. Anal.}, 213(2):689--703, 2014.

\bibitem{Prodi1960}
G.~Prodi.
\newblock Qualche risultato riguardo alle equazioni di {N}avier-{S}tokes nel
  caso bidimensionale.
\newblock {\em Rend. Sem. Mat. Univ. Padova}, 30:1--15, 1960.

\bibitem{Prouse63}
G.~Prouse.
\newblock Soluzioni periodiche dell'equazione di {N}avier-{S}tokes.
\newblock {\em Atti Accad. Naz. Lincei Rend. Cl. Sci. Fis. Mat. Natur. (8)},
  35:443--447, 1963.

\bibitem{Serrin_PeriodicSolutionsNS1959}
J.~Serrin.
\newblock A note on the existence of periodic solutions of the
  {N}avier-{S}tokes equations.
\newblock {\em Arch. Rational Mech. Anal.}, 3:120--122, 1959.

\bibitem{Silvestre_TPFiniteKineticEnergy12}
A.~L. Silvestre.
\newblock Existence and uniqueness of time-periodic solutions with finite
  kinetic energy for the {N}avier-{S}tokes equations in {$\mathbb{R}^3$}.
\newblock {\em Nonlinearity}, 25(1):37--55, 2012.

\bibitem{SimaderWeakDirichletNeumann1990}
C.~G. Simader.
\newblock The weak {D}irichlet and {N}eumann problem for the {L}aplacian in
  {$L^q$} for bounded and exterior domains. {A}pplications.
\newblock In {\em Nonlinear analysis, function spaces and applications, {V}ol.\
  4 ({R}oudnice nad {L}abem, 1990)}, volume 119 of {\em Teubner-Texte Math.},
  pages 180--223. Teubner, Leipzig, 1990.

\bibitem{Takeshita69}
A.~Takeshita.
\newblock On the reproductive property of the {$2$}-dimensional
  {N}avier-{S}tokes equations.
\newblock {\em J. Fac. Sci. Univ. Tokyo Sect. I}, 16:297--311 (1970), 1969.

\bibitem{Taniuchi2009}
Y.~{Taniuchi}.
\newblock {On the uniqueness of time-periodic solutions to the Navier-Stokes
  equations in unbounded domains.}
\newblock {\em {Math. Z.}}, 261(3):597--615, 2009.

\bibitem{Teramoto1983}
Y.~Teramoto.
\newblock {On the stability of periodic solutions of the Navier-Stokes
  equations in a noncylindrical domain.}
\newblock {\em Hiroshima Math. J.}, 13:607--625, 1983.

\bibitem{BaalenWittwer2011}
G.~Van~Baalen and P.~Wittwer.
\newblock {Time periodic solutions of the Navier-Stokes equations with nonzero
  constant boundary conditions at infinity.}
\newblock {\em SIAM J. Math. Anal.}, 43(4):1787--1809, 2011.

\bibitem{Yamazaki2000}
M.~Yamazaki.
\newblock The {N}avier-{S}tokes equations in the weak-{$L^n$} space with
  time-dependent external force.
\newblock {\em Math. Ann.}, 317(4):635--675, 2000.

\bibitem{Yudovich60}
V.~Yudovich.
\newblock {Periodic motions of a viscous incompressible fluid.}
\newblock {\em Sov. Math., Dokl.}, 1:168--172, 1960.

\end{thebibliography}

\end{document}